\theoremstyle{plain}
\newtheorem{theorem}{Theorem}
\newtheorem{lemma}[theorem]{Lemma}
\newtheorem{corollary}[theorem]{Corollary}
\theoremstyle{definition}
\newtheorem{definition}{Definition}
\theoremstyle{remark}
\newtheorem*{remark}{Remark}
\newcommand{\norm}[1]{\left\lVert#1\right\rVert}
\newcommand{\R}{\mathbb{R}}
\newcommand{\rd}{\mathrm{d}}
\DeclareMathOperator*{\argmin}{arg\,min}
\DeclareMathOperator*{\re}{Re}
\DeclareMathOperator*{\diag}{diag}
\renewcommand{\vec}{\operatorname{vec}}
\begin{document} \title{Numerical Optimization Algorithm of Wavefront Phase Retrieval from Multiple Measurements} \author{Ji
Li\thanks{\href{mailto:liji597760593@126.com}{\nolinkurl{liji597760593@126.com}},
LMAM, School of Mathematical Sciences, Peking University, Beijing
100871, China}\quad Tie
Zhou\thanks{\href{mailto:tzhou@math.pku.edu.cn}{\nolinkurl{tzhou@math.pku.edu.cn}},
LMAM, School of Mathematical Sciences, Peking University, Beijing
100871, China}}
\date{\today}
\maketitle
\begin{abstract}
Wavefront phase retrieval from a set of intensity measurements can be
formulated as an optimization problem. Two nonconvex objective models
(MLP and its variants LS) based on maximum likelihood estimation are
investigated. We develop numerical optimization algorithms for
real-valued function of complex variables and apply them to solve the
wavefront phase retrieval problem efficiently. Numerical simulation is
given with application to three wavefront phase retrieval problems. LS
model shows better numerical performances than MLP model. An
explanation for this is that the distribution of the eigenvalues of
Hessian matrix of LS model is more clustered than MLP model. LBFGS
shows more robust performance and takes fewer calculations than other
line search methods.
\end{abstract}
\section{Introduction}
Phase retrieval has been a long-standing inverse problem in imaging
sciences such as X-ray crystallography~\cite{Millane1990},
electron microscopy~\cite{Misell1973}, X-ray diffraction
imaging~\cite{Shechtman2015}, optics~\cite{Kuznetsova1988} and
astronomy~\cite{Fienup1982}, just name a few. Those
imaging techniques are based on the Fourier transform relation between
the optical field and its diffraction pattern. Only the intensity of the
optical field can be recorded by CCD (Charge Coupled Device), while the
more important phase information is lost in the course of
measurement. Mathematically speaking, phase retrieval in continuous setting is to find a function
$u(\bm{x})$ when given its Fourier transform amplitude
$\lvert\mathcal{F}(u(\bm{x}))\rvert$. If we have retrieved the phase
of Fourier transform, then the function can be easily obtained by
inverse Fourier transform, hence the name phase retrieval.

The phase retrieval problem is the
so-called inverse problem. Violation of continuous dependence on data is
the usual difficulty in numerical solutions for most inverse problems
and so is the phase problem. However, for phase retrieval problem,
nonuniqueness is the more important source of ill-conditionedness. Clearly, if
$u(\bm{x})$ is a solution to the phase retrieval problem, then $cu(\bm{x})$ for any scalar $c\in\mathbb{C}$ obeying $\lvert c\rvert
=1$, $u(\bm{x}-\bm{x}_0)$ for any given $\bm{x}_0$ and
$\overline{u(\bm{x}-\bm{x}_0)}$ are also solutions. Those ``trivial
associates'' are acceptable ambiguities from the physical viewpoint. The
results are also applicable in discrete setting. References for
theoretical results in terms of uniqueness of the phase retrieval
problem can be found in~\cite{Akutowicz1956,Akutowicz1957,Barakat1984}
for continuous setting and
in~\cite{Sanz1985,Hayes1982,Bates,Sanz1983}
for discrete model. Those references point out that the solution is
almost \emph{relatively unique} in multidimensional cases, except for
``trivial associates'' solutions.  In literature~\cite{Luke2002}, it is pointed
out that, these uniqueness results are of fundamental importance, but
it does not apply to numerical algorithms, in particular in the
presence of noise. Nonuniqueness in the absolute sense, i.e., all
solutions are same only up to a global phase shift, may be the main
reason for the failure and common stagnation problem of practical algorithm~\cite{Fannjiang2012a}.

Two approaches to mitigate the nonuniqueness of phase retrieval
problem are using a priori information and using multiple
measurements. The former approach relies on all kinds of a
priori information about the signal, such as real-valudedness~\cite{Fienup1982},
positivity and support constraints. Practical reconstruction methods, such as Error Reduction (ER) and its variants (HIO~\cite{Fienup1982}, HPR and
RAAR~\cite{Luke2004,Luke2003}), are developed. While those algorithms are simple
to implement and amenable to additional constraints, they all take the
disadvantages of slow convergence and easy stagnation~\cite{Fienup1986}, usually
hundreds of iterations are needed for a reasonable solution. Besides,
the oversampling method~\cite{Miao2000} is used to get a satisfying
solution in above algorithms.

Instead of assuming constraints on the signal, the concept
of using multiple measurements is introduced to phase retrieval, such
as multiple diffraction patterns~\cite{Candes2013} and
ptychography~\cite{Qian2014}. Taking multiple measurements usually yields uniqueness
solution to phase retrieval. It has shown that $\mathcal{O}(N)$ Gaussian
measurements yield the unique solution with high probability for a
signal
$\bm{u}\in\mathbb{C}^N$~\cite{Candes2012}. PhaseLift~\cite{Candes2012}
which solves a semidefinite programming is proposed by lifting the
vector $\bm{u}$ to a rank one matrix $\bm{u}\bm{u}^*$. So it is
prohibitive to two-dimensional phase retrieval problem.

Optical wavefront phase retrieval problem we study here arises in
optics and astronomy as a special case of phase retrieval problem, where two
moduli including intensity of the wavefront and its Fourier transform
intensity are given. Its algorithms have been
widely used in adaptive optics to correct the aberrations of large
telescopes, allowing them to obtain the diffraction-limited image. Light
wave field distorts when passing through the perturbation atmospheric,
leading the image blurring. Usually, the wavefront is parametrized with Zernike
polynomials coefficients. But the Zernike-based parametrization has
the disadvantage that we do not know how many terms are enough to fit
the wavefront. Moreover, there is no analytic expression of Zernike
polynomials for irregular domain, which is usually used in modern telescopes, such as Hubble Space
Telescope and James Webb Space Telescope (JWST).

In this paper, we consider the wavefront phase retrieval problem from
multiple measurements. In optics, multiple measurements (called
diversity images) are easy to
obtain at different positions (called image planes) through optical
axis. This configuration is called phase diversity, which was proposed by
Gonsalves~\cite{Gonsalves1976} in 1976 is to overcome the
ill-posedness of the wavefront phase retrieval problem. Misell method~\cite{Misell1973} as the natural extension
of ER algorithm was used when there are
more image planes, with the same disadvantages as ER algorithm. We formulate
a direct error metric nonconvex optimization problem from a set of diversity images. The discrete scheme takes the advantage of allowing for the most
accurate representation of the domain~\cite{Luke2002}.

The \emph{main contributions} in this paper consist three points:
\begin{itemize}
\item \emph{Formulation of Objective Functions:} Two objective
  functions (called MLP and LS models) are investigated. MLP model is
  deduced from the negative
logarithm maximum likelihood estimation objective according to the
Poisson photon distribution. The least squares (LS) model used in
iterative projection method then can be interpreted as an
approximation of the MLP model.
\item \emph{Extension of Optimization Method of Function of complex variables:} We develop the line
search method for real-valued function of complex variables, which
directly operates the complex gradient. It is more convenient than
operating the real and image parts respectively. Since 
LBFGS algorithm scales well for large scale problem and shows more
robust performance, we deduce the LBFGS
algorithm, which directly operates on complex gradient. 
\item \emph{Eigenvalue Analysis of Hessian Matrix:} We analyze the
eigenvalues of the complex Hessian matrix for the two objective functions. From
the analysis, the distribution of eigenvalues of Hessian
matrix of LS model is generally more clustered than MLP model, which gives an
intuitive explanation for the better numerical performance.
\end{itemize} 

The plan of this paper is as follows. First in
Section~\ref{sec:form-wavefr-reconstr} we state the formulation of the wavefront phase retrieval problem in
phase diversity setting. In Section~\ref{sec:three-misf-object}, we present two
nonconvex objectives to minimize for the solution of the wavefront
phase retrieval problem. The gradient and
Hessian operators are deduced in
Section~\ref{sec:frechet-derivation}. Then we derive some line search
methods for the unconstrained real-valued function of complex
variables in Section~\ref{sec:unconstr-optim-meth} based on the
$\mathbb{C}$-$\mathbb{R}$ calculus (see
subsection~\ref{sec:compl-grad-hess}) and the global
convergence to a stationary point are proven. In
Section~\ref{sec:least-squares-model}, we give an intuitive
explanation on why LS model converges faster by analyzing the eigenvalues of
Hessian matrices of the two objectives. In Section~\ref{sec:numerical-test}, we apply several line
search algorithms to three wavefront phase retrieval problems, i.e.,
Zernike type, von Karman atmospheric type for disc pupil and JWST type
for segmented pupil. LBFGS algorithm 
shows the most robust performance than other line search methods. Conclusion comes last in
Section~\ref{sec:conclusion}.

To be clear, the notations and conventions used in the paper are
briefly summarized here. We use script letter $\mathcal{F}$ for Fourier transform and $F$ for
discrete Fourier transform. Symbols $(x,y)$ and $(\xi,\eta)$
denotes the coordination in signal plane and Fourier plane
respectively. For every complex vector $\bm{z}\in \mathbb{C}^n$,
$\lvert\bm{z}\rvert\in\R^n$ denotes its element-wise magnitude
vector. $\bm{a}\circ\bm{b}$ and $\bm{a}/\bm{b}$ denote their element-wise
Hadamard product and quotient of matrices or vectors $\bm{a}$ and
$\bm{b}$. $\bar{\bm{z}}$ denotes the element-wise complex conjugate of
$\bm{z}$.

\section{Mathematical Model}
\label{sec:form-wavefr-reconstr}

\subsection{Model of Optical Image Formation}
\label{sec:image-model}
We begin with an isoplanatic Fourier optics model for a simple
telescope with incoherent light and denote the optical wave field
(wavefront) as $u(\bm{x}),\bm{x}=(x,y)$. The relation between intensity
$I$ recorded in the focus plane and object
$\psi$ is given by
\begin{equation}
  \label{eq:2} I(\xi,\eta)=\iint_{\R^2}\lvert
U(\xi-x,\eta-y)\rvert^2\lvert
\psi(x,y)\rvert^2\rd x\rd y,
\end{equation}where we cancel out the phase shift before the integral
with the reason that it does not change the intensity. The kernel in Equation~\eqref{eq:2} is
\begin{equation*} \lvert U(\xi,\eta)\rvert^2 = \lvert
\mathcal{F}(u(x,y))\rvert^2
\end{equation*}with coordination scaling. The kernel characterizes the optical system, and
$\lvert U(\xi,\eta)\rvert^2$ is known as the \emph{point spread
  function} (PSF) of the optical system. Equation~\eqref{eq:2} is the
basis of Fourier optics, the strict deduction from the Helmholtz
equation and Huygens-Fresnel principle can be referred in
book~\cite{Goodman2005} or the review paper~\cite{Luke2002}. If the
intensity is recorded in the defocus plane, there is some phase shift
in the kernel, see \eqref{eq:4}.

If the optical system is in a
homogeneous medium, the wavefront $u(x,y)$ is the indicator
function of lens region $A$, i.e.,
\begin{equation*} u(\bm{x})=\chi_A(\bm{x}) =
  \begin{cases} 1,& \bm{x}\in A,\\ 0,&\text{otherwise}.
  \end{cases}
\end{equation*}

\subsection{Wavefront Phase Retrieval Problem}
\label{sec:wavefr-reconstr-prob}
In most applications, however, the assumption of homogeneity is not
correct for the optical field. The resulting deviations are called phase aberrations. Deviations may occur at any
point along the path of propagation and can be caused by atmosphere
turbulence in an intervening medium or geometric flaws of lens.
Aberration resulting from atmosphere turbulence can be simulated with
the 
von Karman model based on a realization of a strictly stationary
Gaussian stochastic process. In such case, the wavefront $u(\bm{x})$
is not the indicator function but a complex function within the lens
region. It can be represented in polar form
\begin{equation}
  \label{eq:3} u(x,y) = a(x,y)\exp(i\phi(x,y)),
\end{equation} where $a(x,y)$ denotes the transmitting function and
$\phi(x,y)$ is the aberration phase.
 
Equation~\eqref{eq:2} becomes $I = \lvert\mathcal{F}(u)\rvert^2$ when
object $\psi(\bm{x})$ is a $\delta$ function. In astronomy and
optical design, the object is assumed a point optical source. Then the
wavefront phase retrieval problem arises when recovering generalized
pupil function $u(\bm{x})$ from the intensity $I$ and intensity $\lvert u\rvert^2$. Wavefront phase retrieval is the first
step in adaptive optics technique to measure and reshape the wavefront
phase. Then the reconstructed wavefront is used to compensate the 
wavefront aberrations induced by atmospheric turbulence. And it is
also used to optical test to characterize the flaws of lens. To
overcome the ill-posedness of the wavefront phase retrieval
problem, the phase diversity method is
used by providing multiple measurements to further restrict the
wavefront $u(\bm{x})$.

In phase diversity setting, diversity image intensities are 
observed at defocus $d_m,m=1,2,\ldots,L-1$ (in wavelength) are given
as
\begin{align}
  \label{eq:4}
  I_m(\xi,\eta) &= \lvert \mathcal{F}(u;d_m)\rvert^2\nonumber\\
   &=\left\lvert \iint_{\R^2}u(x,y)\exp\left[i2\pi
  d_m(x^2+y^2)\right]\exp\left[-i2\pi(\xi x+\eta y)\right]\rd x
\rd y\right\rvert^2.
\end{align}

In summary, the mathematical model of wavefront phase retrieval with
phase diversity measurements is to find the wavefront function $u$
when given the intensity $\lvert u\rvert^2=I_0$ and phase diversity image
intensities $I_m$, $m =
1,2,\ldots,L-1$.

Numerical algorithms to wavefront phase retrieval problem is based on discretization.
In the following, we focus on the discrete version of the 
problem. Function $u$ is replaced by a vector $\bm{u}$. Given
the data set $\bm{I}_0^{\text{obs}}$ and $\bm{I}_m^{\text{obs}}$, $m=1,2,\ldots,L-1$, we formulate the following optimization problem
\begin{equation}
  \label{eq:11}
  \bm{u} := \argmin_{\bm{u}\in \mathbb{C}^N} \sum_{m=0}^{L-1}\mathcal{S}\left(\bm{I}_m^{\text{obs}},\bm{I}_m(\bm{u})\right)= \argmin_{\bm{u}\in\mathbb{C}^N}\sum_{m=0}^{L-1}\mathcal{E}_m(\bm{u}),
\end{equation}
where $\mathcal{S}(\cdot,\cdot)$ is the data misfit functional, $\bm{I}_m$ are the predicted data from image model
$\bm{I}_m=\lvert F(\bm{u};d_m)\rvert^2$ and $\bm{I}_m^{\text{obs}}$
are the observed data.

\section{Choice of the Data Misfit Functional}
\label{sec:three-misf-object}
\subsection{Maximum Likelihood Estimation}
\label{sec:maxim-likel-estim}

The physics of CCD for recording the intensity of incoming wave field
is a photon counting process, the noise resulted from the process and
other noise, such as external/internal background radiation noise and
thermoelectric noise, all yield Poisson process due to photon
counting. Readout noise which is modeled by Gaussian distribution can
be lowered to zero~\cite{Snyder1993}, so we only consider the Poisson
noise model in CCD\@.

For brevity, we use the one-dimensional notation to describe the
essence.\footnote{It should be two-dimensional for wavefront and
diversity images in discrete forms, and we can stack the array in
column to get a one-dimensional vector.} The observed photon counting
data in diversity images $\bm{I}_m^{\text{obs}},m=0,1,\ldots,L-1$ are
described by random vectors $\bm{\mathfrak{I}}_m\in \R^N,
m=0,1,\ldots,L-1$. The components of every vector are random variables
yielding independent Poisson distribution with mean $\bm{I}_m\in
\R^N$, where $\bm{I}_m$ should be equal to the predicted data
$\bm{I}_m=\lvert F(\bm{u};d_m)\rvert^2$ from the image formulation
model when given the wavefront $\bm{u}$. The observed data
$\bm{I}_m^{\text{obs}}=(I_{m,1}^{\text{obs}},I_{m,2}^{\text{obs}},\ldots,I_{m,N}^{\text{obs}})\in\R^N$
is a sample of random vector $\bm{\mathfrak{I}}_m$.

Without loss of generality, we omit the subscript $m$ for deducing the
misfit functional. Assuming that the number of photons $n_i$ is
proportional to $I_i^{\text{obs}},i=1,2,\ldots,N$. The probability of
recording intensity $I_i^{\text{obs}}$ (or receiving $n_i$ photons) is
\begin{equation*}
  p(\mathfrak{I}_i=I_i^{\text{obs}};I_i) = \frac{(I_i)^{I_i^{\text{obs}}}}{I_i^{\text{obs}}\, !}e^{-I_i}.
\end{equation*}
By independent property, the joint probability density is given by
\begin{equation*}
  p(\bm{\mathfrak{I}}=\bm{I}^{\text{obs}};\bm{I}) = \prod_{i=1}^N \frac{(I_i)^{I_i^{\text{obs}}}}{I_i^{\text{obs}}\, !}e^{-I_i}.
\end{equation*}
The negative log-likelihood function associated with the Poisson
distribution can be cast as
\begin{equation}
  \label{eq:12}
  \mathcal{L}(\bm{I}|\bm{I}^{\text{obs}}) = \sum_{i=1}^N I_i-I_i^{\text{obs}}\log I_i,
\end{equation}
where we omit the $\log\left(I_i^{\text{obs}}\,!\right)$ constant
offset in the summand.\footnote{If we normalize the misfit functional
by subtracting the minimal value
$I_i^{\text{obs}}-I_i^{\text{obs}}\log I_i^{\text{obs}}$, we obtain
the Kullback-Leibler divergence
$\mathbb{K}\mathbb{L}\left(\bm{I}^{\text{obs}},\bm{I}\right)=\sum_{i=1}^{N}I_i-I_i^{\text{obs}}-I_i^{\text{obs}}\log\left(\frac{I_i}{I_i^{\text{obs}}}\right)$.}
So the data misfit functional in discrete model can be given by
\begin{equation}
  \label{eq:13}
  \mathcal{S}(\bm{I}^{\text{obs}},\bm{I}) = \sum_{i=1}^N  [K(\bm{u})]_i-I_i^{\text{obs}}\log\bigl([K(\bm{u})]_i\bigr),
\end{equation}
where $K(\bm{u})=\lvert F(\bm{u};d)\rvert^2$ for a given $d$.

Equation~\eqref{eq:12} can be written as
\begin{equation}
  \label{eq:14}
  \mathcal{L}(\bm{I}|\bm{I}^{\text{obs}}) =\widetilde{\mathcal{L}}(\bm{M}|\bm{M}^{\text{obs}}) = \sum_{i=1}^N M_i^2-(M_i^{\text{obs}})^2\log(M_i^2),
\end{equation}
by substituting $M_i= \sqrt{I_i}$ into~\eqref{eq:12},
where $M_i,M_i^{\text{obs}}$ is pointwise amplitude. 

Expand~\eqref{eq:14} around $\bm{M}_i =
\bm{M}_i^{\text{obs}}$, leading to 
\begin{equation}
  \label{eq:15}
  \widetilde{\mathcal{L}}(\bm{M}|\bm{M}^{\text{obs}})\simeq 2\sum_{i=1}^N(M_i-M_i^{\text{obs}})^2+\text{const.}.
\end{equation}
Equation~\eqref{eq:15} can be regarded as an approach to approximating
the Poisson likelihood function $\mathcal{L}$, and
$\widetilde{\mathcal{L}}$ corresponds to a Gaussian distribution about
$M_i$ with a variance of $1/4$. So another data misfit function can be
proposed
\begin{equation}
  \label{eq:16}
  \mathcal{S}(\bm{I}^{\text{obs}},\bm{I}) = \widetilde{\mathcal{S}}(\bm{M}^{\text{obs}},\bm{M})=\norm{\widetilde{K}(\bm{u})-\bm{M}^{\text{obs}}}_2^2,
\end{equation}
where $\widetilde{K}(\bm{u})=\lvert F(\bm{u};d)\rvert$.  It is worth
to note that data misfit function~\eqref{eq:16} is fortunately
identical to the cost function found throughout the phase retrieval
literature. Fienup~\cite{Fienup1982} considered ER method as a
fixed-step gradient descent method applied to this cost function,
which can be clearly seen in Section~\ref{sec:frechet-derivation}.
\begin{remark}
If we expand~\eqref{eq:12} around $\bm{I}=\bm{I}^{\text{obs}}$,
leading to $ \mathcal{L}(\bm{I}|\bm{I}^{\text{obs}}) \simeq
\frac{1}{2}\sum_{i=1}^N
\frac{\left(I_i-I_i^{\text{obs}}\right)^2}{I_i^{\text{obs}}}$. This
data misfit functional is recommended in~\cite{Isernia1999}. However,
in our numerical tests this functional is not so good to use, so we do
not consider it in the paper.
\end{remark}

\subsection{Two Objective Models}
\label{sec:two-models}

We have obtained the data misfit functional for every diversity image,
then we can just add up every misfit function with equal or different
weights, here the equal weight scheme is used for brevity. By looking at~\eqref{eq:13} and~\eqref{eq:16}, if there exists
an index such that $\lvert
F(\bm{u};d)\rvert_i$ closes to zero, then its gradient is singular at
this point, so we need some perturbation applied to objectives to
overcome possible overflow in numerical algorithms. 

Two data misfit models are presented to measure the error that
quantifies inconsistency between the given data and the predicted data
from the image formulation model~\eqref{eq:4}. Hereafter the
superscript $(\cdot)^{\text{obs}}$ are omitted without
ambiguity. Since equation~\eqref{eq:16} can be rewritten as
\begin{equation}
  \label{eq:1}
   \mathcal{S}(\bm{I}^{\text{obs}},\bm{I}) = \sum_{i=1}^N\biggl([K(\bm{u})]_i+M_{i}^{2}-2\lvert
     K(\bm{u})\rvert_iM_{i}\biggr).
\end{equation}

We consider the following two
models for the error metric $\mathcal{E}_m(\bm{u})$ in the paper:
\begin{itemize}
\item Maximum Likelihood Poisson (MLP) model:
  \begin{equation}
    \label{eq:18}
    \mathcal{J}_m(\bm{u})=\sum_{i=1}^N\biggl( [K_m(\bm{u})]_i-I_{m,i}\log\bigl([K_m(\bm{u})]_i+\epsilon^2\bigr)\biggr),
  \end{equation}
\item Least Squares (LS) model:
  \begin{align}
    \widetilde{\mathcal{J}}_m(\bm{u})&=\sum_{i=1}^N\biggl([K_m(\bm{u})]_i-2\sqrt{
      [K_m(\bm{u})]_i+\epsilon^2}M_{m,i}\biggr),   \label{eq:19}
  \end{align}
\end{itemize}
where operators $K_m\colon \mathbb{C}^N\to \R^N$, $K_m(\bm{u})=\lvert
F(\bm{u};d_m)\rvert^2$, $m=1,2,\ldots,L-1$ are the forward formulation
of predicted data, and $K_0(\bm{u})=\lvert\bm{u}\rvert^2$ is the
measurement of pointwise amplitude of the wavefront.

Having obtained the misfit objective functional, in the framework of
optimization we should derive the gradient and Hessian to utilize the
line search method. In the next subsection, we use the Fr\'{e}chet
derivative to reduce the corresponding gradient and Hessian for MLP and
LS models respectively.

\subsection{Deduction of Gradient and Hessian Operator by Fr\'{e}chet Derivative}
\label{sec:frechet-derivation}

In this section, the gradient and Hessian operator (Hessian-vector
multiplication) of the two objectives are derived. We deduce those
expressions according to Fr\'{e}chet derivative. Because of the similarity
of every image plane misfit functional, we take single image data to
derive the expressions for brevity with notation omitted subscript
$m$. All the above objectives~\eqref{eq:18} and~\eqref{eq:19}
can be regarded as functional defined on Hilbert
space $\mathbb{C}^N$ to $\R$. The
gradient and Hessian operator of two objectives models to minimize
are presented in the following. For details of deduction of MLP
and LS models see Appendix~\ref{sec:grad-hess-oper}
and~\ref{sec:grdi-hess-oper}.

We use symbol $F_m$ and $F_m^*$ to denote the
Fourier transform $F(\bm{u};d_m)$ with defocus $d_m$ and adjoint
operator respectively. For MLP model~\eqref{eq:18}, its gradient and
Hessian-vector multiplication are given by
\begin{subequations}
\begin{align}
  \nabla \mathcal{J}_m(\bm{u}) &= F_m^*\left(F_m(\bm{u})\circ
    \left(\bm{1}-\frac{\bm{I}_m}{\lvert F_m(\bm{u})\rvert^2+\epsilon^2}\right)\right), \label{eq:30}\\
 \mathcal{H}_{\mathcal{J}_m}[\bm{u}](\bm{h}) &= F_m^*\left(\left(\bm{1}-\frac{\epsilon^2\bm{I}_m}{\left(\lvert F_m(\bm{u})\rvert^2+\epsilon^2\right)^2}\right)\circ
F_m(\bm{h})\right)\\
&\phantom{=}\qquad+F_m^*\left(\frac{\bm{I}_m\circ
    F_m(\bm{u})^2}{\left(\lvert F_m(\bm{u})\rvert^2+\epsilon^2\right)^2}\circ \overline{F_m(\bm{h})}\right).  \label{eq:31}
\end{align}
\end{subequations}

For LS model~\eqref{eq:19}, its gradient and Hessian-vector multiplication are given by
\begin{subequations}
\begin{align}
   \nabla \widetilde{\mathcal{J}}_m(\bm{u}) &=\bm{u}-F_m^*\left(\frac{\bm{M}_m}{\sqrt{\lvert
      F_m(\bm{u})\rvert^2+\epsilon^2}}\circ F_m(\bm{u})\right), \label{eq:32}\\
 \mathcal{H}_{\widetilde{\mathcal{J}}_m}[\bm{u}](\bm{h}) &= F_m^*\left(\left(\bm{1}-\frac{\bm{M}_m}{2\sqrt{\lvert
       F_m(\bm{u})\rvert^2+\epsilon^2}}\circ\left(\frac{\lvert
       F_m(\bm{u})\rvert^2+2\epsilon^2}{\lvert
       F_m(\bm{u})\rvert^2+\epsilon^2}\right)\right)\circ
 F_m(\bm{h})\right)\notag\\
&\phantom{=}\qquad+F_m^*\left(\frac{F_m(\bm{u})^2\circ \bm{M}_m}{2\left(\lvert
    F_m(\bm{u})\rvert^2+\epsilon^2\right)^{3/2}}\circ \overline{F_m(\bm{h})}\right).\label{eq:33}
\end{align}
\end{subequations}
We see that the gradient $\nabla \widetilde{\mathcal{J}}_m(\bm{u})$ is
identical to the projection onto the Fourier transform constraints in ER
algorithm for the $m$th diversity image with a perturbed $\epsilon^2$.

It is noted that gradient and Hessian operators are similar to the
complex gradient and Hessian-vector multiplication in the framework of
$\mathbb{C}$-$\mathbb{R}$ theory (see
Section~\ref{sec:compl-grad-hess}). It is more easy to deduce the
above gradient and Hessian expression by following the Fr\'{e}chet
derivative instead of the definition~\eqref{eq:23} and~\eqref{eq:24}.The theory deals with real-valued
functions of complex variables. The objectives here are the cases. In next section, we develop several optimization methods which are
similar to standard optimization method of real variables. It is directly
implemented by complex gradient, not by representing the complex
gradient in its
real and imaginary parts.

\section{Unconstrained Optimization Method}
\label{sec:unconstr-optim-meth}

In this section, we consider to minimize the real-valued function $f(\bm{z})$,
where $\bm{z}\in\mathbb{C}^N$. The optimization method is
usually carried out with respect to the real and imaginary parts of
these complex variables. This approach is used in literature~\cite{Luke2002}
for wavefront phase retrieval problem. The reason for this approach is
to avoid difficulties with the definition and interpretation of the
gradient and Hessian with respect to complex
variables~\cite{VanDenBos1994}. In next subsection we review the
$\mathbb{C}$-$\mathbb{R}$ theory of complex gradient and Hessian of
real-valued function of complex variables. Those concepts are first
described by researchers in signal processing, in which a number of
problems with real-valued function of complex variables arise.

\subsection{Complex Gradient and Hessian}
\label{sec:compl-grad-hess}

A complex vector $\bm{z}\in\mathbb{C}^N$ can be represented by
$\bm{z}^{\mathcal{C}}=(\bm{z}^T,\bm{\bar{z}}^T)^T\in\mathbb{C}^{2N}$ or
$\bm{z}^{\mathcal{R}}=(\bm{x}^T,\bm{y}^T)^T\in\mathbb{R}^{2N}$ with its real parts and
image parts. Formally the conjugate coordinate derivatives are defined as
\begin{subequations}
\begin{align}
  \frac{\partial f}{\partial \bm{z}} :&= \frac{\partial
    f(\bm{z},\overline{\bm{z}})}{\partial
    \bm{z}}\big|_{\overline{\bm{z}}=\text{const.}} = \left(\frac{\partial
      f}{\partial z_1},\frac{\partial
      f}{\partial z_2},\ldots,\frac{\partial
      f}{\partial z_N}\right), \label{eq:21}\\
  \frac{\partial f}{\partial \overline{\bm{z}}} :&= \frac{\partial
    f(\bm{z},\overline{\bm{z}})}{\partial
    \overline{\bm{z}}}\big|_{\bm{z}=\text{const.}} = \left(\frac{\partial
      f}{\partial \overline{z_1}},\frac{\partial
      f}{\partial \overline{z_2}},\ldots,\frac{\partial
      f}{\partial \overline{z_N}}\right).\label{eq:22}
\end{align}
\end{subequations}
Those definitions follow standard notation from multivariate calculus
in which derivatives are row vectors and gradients are column vectors. 
\begin{definition}
Assume that all partial derivatives of a real-valued function $f$ of
complex variables $\bm{z}$ with respect to $\bm{x}$ and $\bm{y}$
exist. The complex gradient of $f$ is defined as 
\begin{equation}
  \label{eq:23}
  \nabla f=\left(\frac{\partial f}{\partial \bm{z}}\right)^*
  =\left(\frac{\partial f}{\partial \overline{\bm{z}}}\right)^T.
\end{equation}
Similarly, we define
\begin{equation*}
  H_{zz} := \frac{\partial }{\partial
    \bm{z}}\left(\frac{\partial f}{\partial \bm{z}}\right)^{*},\quad  H_{\bar{z}z} := \frac{\partial }{\partial
    \bar{\bm{z}}}\left(\frac{\partial f}{\partial \bm{z}}\right)^{*},\quad  H_{z\bar{z}} := \frac{\partial }{\partial
    \bm{z}}\left(\frac{\partial f}{\partial \bar{\bm{z}}}\right)^{*},\quad
 H_{\bar{z}\bar{z}} := \frac{\partial }{\partial
    \bar{\bm{z}}}\left(\frac{\partial f}{\partial \bar{\bm{z}}}\right)^{*},
\end{equation*}
then the complex Hessian matrix is given by
\begin{equation}
\label{eq:24}
  H^{\mathcal{C}}_f(\bm{z}):=
  \begin{pmatrix}
    H_{zz}& H_{\bar{z}z}\\
H_{z\bar{z}}&  H_{\bar{z}\bar{z}}
  \end{pmatrix}.
\end{equation}
\end{definition}
Actually, $f$ is not analytic, for the reason that it is not satisfied the Cauchy-Riemann
 condition. Without ambiguity, the subscript $f$ and superscript
 $\mathcal{C}$ of $H$ may be omitted. If $f$ is real-valued, then
\[\overline{\frac{\partial f}{\partial z_i}}=\frac{\partial
    f}{\partial \overline{z_i}}, \text{ and }  H_{zz} =
H_{zz}^*, H_{\bar{z}z} =H_{z\bar{z}}^*=\overline{H_{z\bar{z}}},
H_{\bar{z}\bar{z}} = \overline{ H_{zz}}.\]
 Because of those equalities, we only need store $\frac{\partial
  f}{\partial \bm{z}}$, $H_{zz}$ and
$H_{\bar{z}z}$~\cite{Kreutz-Delgado2009}.

Optimization method is deduced from the Taylor's approximation
expansion, which takes the form
\begin{align}
   f(\bm{z}+\Delta \bm{z}) &= f(\bm{z}) + 2\re\left\langle \Delta
   \bm{z},\nabla f(\bm{z})\right\rangle +
   \re\left\langle \Delta
   \bm{z},H_{zz}\Delta \bm{z} + H_{\bar{z}z}\overline{\Delta
   \bm{z}}\right\rangle+ \text{h.o.t}.\label{eq:28}
\end{align}
From~\eqref{eq:28}, we can define the Hessian operator or
Hessian-vector multiplication at point $\bm{z}$ of function $f$:
\begin{equation}
  \label{eq:29}
  \mathcal{H}_f[\bm{z}](\bm{h}) = H_{zz}\bm{h} + H_{\bar{z}z}
   \overline{\bm{h}}.
\end{equation}
The above definition matches the Fr\'{e}chet derivative as indicted in \eqref{eq:54}.

\subsection{Line Search Methods}
\label{sec:line-search-methods}

Considering the minimization of a real-valued function $f(\bm{z})$ of
complex variables, it is more convenient to operate the complex
gradient directly. The straightforward extension for optimization of
function of complex variables details in this section. Line search
methods construct a sequence
\begin{equation}
  \label{eq:37}
  \bm{z}_{k+1} = \bm{z}_k+\alpha_k\bm{d_k}.
\end{equation}
The basic method is first to choose a descent direction
$\bm{d}_k\in\mathbb{C}^N$, then to minimize a one-dimensional optimization
problem with some line search scheme to find the step length
$\alpha_k\in\mathbb{R}$ at $k$th
iteration. 

A direction $\bm{d}_k$ is called a descent direction, if it satisfies 
\begin{equation}
  \label{eq:38}
  \re(\bm{d}_k^*\bm{g}_k) <0,
\end{equation}
where $\bm{g}_k = \nabla f(\bm{z}_k)$. With holding global convergence to
local minimizer, step length $\alpha_k$ is not arbitrary. There is
usually a requirement of $\alpha_k$, which are known as Wolfe conditions:
\begin{subequations}
  \begin{equation}
    \label{eq:39}
    f(\bm{z}_k + \alpha_k \bm{d}_k)\leq f(\bm{z}_k) + c_1\alpha_k \re(\bm{d}_k^*\bm{g}_k)
  \end{equation}
and
\begin{equation}
  \label{eq:40}
  \re (\bm{d}_k^*\bm{g}_{k+1}) \geq c_2 \re(\bm{d}_k^*\bm{g}_k),
\end{equation}
\end{subequations}
where condition $0<c_1<c_2<1$ is satisfied. Generally we take $c_1 =
10^{-4}$, $c_2 = 0.9$ as commended in
book~\cite{Nocedal2006}. Equations~\eqref{eq:39} and~\eqref{eq:40} are
known as the sufficient decrease and curvature condition
respectively. The former ensures sufficient decrease of the objective function, and the latter ensures the
gradient converges to zero.

According to the choices of descent direction $\bm{d}_k$, we introduce
some common line search methods. The simple case is the steepest
descent method, which takes $\bm{d}_k = -\bm{g}_k$. The steepest
descent (SD) method choose the fastest descent direction at every
iteration, but its asymptotic rate of convergence is inferior to other
methods. For poorly conditioned problems, it increasingly `zigzags' as
the gradients point nearly orthogonally to the shortest direction to a
minimum point. To accelerate the rate of convergence, nonlinear
conjugate gradient (NCG) method is widely used. The conjugate gradient
direction $\bm{d}_k$ is generated by the recurrence relation
\begin{equation}
  \label{eq:41}
  \bm{d}_k = -\bm{g}_k + \beta_k \bm{d}_{k-1},
\end{equation}
where $\bm{d}_0 = \bm{0}$. There are a variety of options to choose
parameter $\beta_k$ for nonlinear problem~\cite{Steihaug1983}. In this
paper, we take the Hestenes-Stiefel form
\begin{equation}
  \label{eq:42}
  \beta_k^{HS} = -\frac{\re\left(\bm{g}_k^*(\bm{g}_k-\bm{g}_{k-1})\right)}{\re\left(\bm{d}_{k-1}^*(\bm{g}_k-\bm{g}_{k-1})\right)}.
\end{equation}

Since we have the exact Hessian-vector multiplication~\eqref{eq:31} and~\eqref{eq:33}, so we can
implement the truncated Newton (TN) method, where the Newton direction
is obtained by solving the equation
\begin{equation*}
  \mathcal{H}_f[\bm{z}_k]\bm{d}_k=-\bm{g}_k
\end{equation*}
 using conjugate gradient method. Unfortunately, the corresponding Hessian
matrix is not always positive definite. So in this case, we should
adopt the negative curvature direction, see~\cite[Chapter
6]{Nocedal2006}. Newton method is considered as a two-order method
provided initial point is in the neighborhood of the 
minimizer. It is shown in our numerical test that TN method is not robust and performs worse compared with other methods.

We deduce the LBFGS method for optimization of function
of complex variables. 
LBFGS has advantages of better rate of convergence and less memory need
than BFGS method. It is as well as easily implemented by two-loop recursion
with vector-vector inner product. We begin with the BFGS formula of updating
the approximation Hessian matrix.

 If the number of variables $N$ is large, the cost of storing and
manipulating the matrix $B_k^{\mathcal{C}}$ become prohibitive. The famous
LBFGS method
is appropriate for large-scale problem, and  the descent direction $\bm{d}_k$ can be obtained by the easy
two-loop recursion, which is described in
Algorithm~\ref{alg:1}.
\begin{algorithm}[H]
  \caption{L-BFGS two-loop recursion\label{alg:1}}
  \begin{algorithmic}
    \REQUIRE $\bm{g}_k$, $\bm{s}_i=\bm{z}_{i+1}-\bm{z}_i$,
    $\bm{y}_i=\bm{g}_{i+1}-\bm{g}_i$, $\rho_i =
    \frac{1}{\re(\bm{y}_i^*\bm{s}_i)}$, for $i=k-m,\ldots,k-1$,
    \ENSURE $\bm{d}$, such that $\bm{d}^{\mathcal{C}}=-B_k^{\mathcal{C}}\bm{g}_k^{\mathcal{C}}$
\STATE $\bm{d}\leftarrow -\bm{g}_k$
\FOR {$i=k-1,k-2,\ldots,k-m$}
\STATE $\alpha_i = \rho_i\re(\bm{s}_i^*\bm{d})$
\STATE $\bm{d}\leftarrow \bm{d}-\alpha_i \bm{y}_i$
\ENDFOR
\STATE $\bm{d}\leftarrow \gamma\bm{d}$, with the scaling suggested
by Shanno and Phua $\gamma=\frac{\re(\bm{y}_{k-1}^*
  \bm{s}_{k-1})}{\bm{y}_{k-1}^*\bm{y}_{k-1}}$
\FOR {$i=k-m,k-m+1,\ldots,k-1$}
\STATE $\beta\leftarrow \rho_i\re(\bm{y}_i^*\bm{d})$
\STATE $\bm{d}\leftarrow \bm{d}+(\alpha_i-\beta)\bm{s}_i$
\ENDFOR
  \end{algorithmic}
\end{algorithm}

Its performance depends on the number of storing pair
$\{\bm{s}_i,\bm{y}_i\}$ $m$, and usually $m$ takes 10, 20. In wavefront
phase retrieval problem, the numerical performance is not sensitive to $m$, in
which $m$ takes 2. It is shown that its performance scales
well and takes the priority over the NCG method in our numerical tests.

Another way to achieve global convergence is to use the trust region
technique to determine a search direction and step length
simultaneously. The approach can be easily extended to function of
complex variables. We omit the extension for the reason that its
performance is similar to truncated Newton method.

It is noted that the major differences between the optimization of
function of real variables and optimization of function of complex
variables are that we use
the complex gradient instead of its real and imaginary parts and we take the real parts of some complex
quantities. The generalization of the line search method to complex
gradient can be proven in the framework of $\mathbb{C}$-$\mathbb{R}$
calculus. It is easy to verify the correction, we omit the details with
consideration of paper limitation.

\subsection{Global Convergence of Wavefront Phase Retrieval}
\label{sec:glob-conv-wavefr}

The below theorem states the global convergence to a stationary point
of function of complex variables. Its proof is the same as the version
for function of real variables.
\begin{theorem}
  Suppose that $f\colon \mathbb{C}^N\to\mathbb{R}$ is bounded below in
  $\mathbb{C}^N$ and that $f$ is continuously differentiable. And
  assume that the gradient $\nabla f$ is Lipschitz continuous, that
  is, there exists a constant $L>0$ such that
  \begin{equation}
    \norm{\nabla f(\bm{u})-\nabla f(\bm{v})}_2\leq
    L\norm{\bm{u}-\bm{v}}_2,\qquad \forall\quad \bm{u},\bm{v}\in\mathbb{C}^N,
\label{eq:45}
  \end{equation}
The iteration $\bm{z}_{k+1}=\bm{z}_k+\alpha_k\bm{d}_k$, where
$\bm{d}_k$ is a descent direction, i.e., $\re\left(\bm{d}_k^*\nabla
  f\right)<0$, and $\alpha_k$ satisfies the Wolfe conditions, then
$\bm{z}_k$ converges to a stationary point $\bm{z}^{\dagger}$ of $f$,
i.e., $\nabla f(\bm{z}^{\dagger})=\bm{0}$. 
\end{theorem}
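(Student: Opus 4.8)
The plan is to mirror the classical Zoutendijk argument for global convergence of line-search methods (as in~\cite[Chapter 3]{Nocedal2006}), carefully checking that each step survives the passage from $\mathbb{R}^{2N}$ to $\mathbb{C}^N$ once inner products are replaced by $\re(\bm{a}^*\bm{b})$. The key observation is that the map $\bm{z}\mapsto\bm{z}^{\mathcal{R}}=(\bm{x}^T,\bm{y}^T)^T$ is an isometry from $\mathbb{C}^N$ with the real inner product $\langle\bm{a},\bm{b}\rangle_{\mathbb{R}}:=\re(\bm{a}^*\bm{b})$ onto $\mathbb{R}^{2N}$ with its standard inner product, and that under this identification the complex gradient $\nabla f$ (in the sense of Definition, equation~\eqref{eq:23}) corresponds to the ordinary gradient of $f$ viewed as a function of $\bm{z}^{\mathcal{R}}$, up to the harmless factor-of-two bookkeeping visible in the Taylor expansion~\eqref{eq:28}. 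So essentially the whole theorem is a corollary of the real-variable result; the work is just to make that reduction precise.

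First I would spell out the reduction: define $g:\mathbb{R}^{2N}\to\mathbb{R}$ by $g(\bm{z}^{\mathcal{R}})=f(\bm{z})$, check that $g$ is bounded below and continuously differentiable because $f$ is, and verify that the Lipschitz bound~\eqref{eq:45} transfers to $\nabla g$ (the norms agree under the isometry). Next I would check that the descent condition $\re(\bm{d}_k^*\bm{g}_k)<0$ is exactly $\nabla g(\bm{z}_k^{\mathcal{R}})^T\bm{d}_k^{\mathcal{R}}<0$, and that the Wolfe conditions~\eqref{eq:39}–\eqref{eq:40} written with $\re(\bm{d}_k^*\bm{g})$ are precisely the real Wolfe conditions for $g$ along the direction $\bm{d}_k^{\mathcal{R}}$. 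Then I would invoke Zoutendijk's theorem: the sufficient-decrease and curvature conditions plus Lipschitz continuity of $\nabla g$ give $\sum_k \cos^2\theta_k\,\norm{\nabla g(\bm{z}_k^{\mathcal{R}})}^2<\infty$, where $\cos\theta_k$ is the angle between $\bm{d}_k^{\mathcal{R}}$ and $-\nabla g(\bm{z}_k^{\mathcal{R}})$. Finally, translating back, $\norm{\nabla g(\bm{z}_k^{\mathcal{R}})}=\norm{\nabla f(\bm{z}_k)}_2$, so $\nabla f(\bm{z}_k)\to\bm{0}$ along a subsequence (or for the whole sequence under the usual mild assumption on $\theta_k$ bounded away from $\pi/2$), i.e. every limit point $\bm{z}^{\dagger}$ is stationary.

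The one genuinely delicate point — and the step I expect to be the main obstacle to a clean write-up — is the factor-of-two discrepancy between the complex Taylor expansion~\eqref{eq:28}, which has $f(\bm{z}+\Delta\bm{z})=f(\bm{z})+2\re\langle\Delta\bm{z},\nabla f(\bm{z})\rangle+\mathrm{h.o.t.}$, and the real expansion $g(\bm{z}^{\mathcal{R}}+\bm{\delta})=g(\bm{z}^{\mathcal{R}})+\nabla g^T\bm{\delta}+\mathrm{h.o.t.}$ Concretely this means $\nabla g(\bm{z}^{\mathcal{R}})^{\mathcal{R}\text{-lift}} = 2\nabla f(\bm{z})$ under the isometry, so one must be consistent about whether the "gradient" used in the Wolfe conditions and in Zoutendijk's inequality is $\nabla f$ or $2\nabla f$. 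The factor $2$ is uniform, so it only rescales $c_1$, $c_2$, and the Lipschitz constant $L$ by harmless constants and does not affect the conclusion $\nabla g\to 0 \iff \nabla f\to 0$; but it needs to be stated carefully, and it is exactly the kind of thing the paper glosses by saying "its proof is the same as the version for function of real variables." I would handle it by fixing the convention $\re(\bm{d}_k^*\bm{g}_k)$ once and for all and noting that with this convention the directional derivative of $f$ along $\bm{d}_k$ is $2\re(\bm{d}_k^*\bm{g}_k)$, which is what enters the line-search analysis; everything else is routine.
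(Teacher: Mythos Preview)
Your proposal is correct and rests on the same Zoutendijk-type argument the paper uses; the difference is one of packaging. You reduce to $\mathbb{R}^{2N}$ via the isometry $\bm{z}\mapsto\bm{z}^{\mathcal{R}}$ and then invoke the real-variable Zoutendijk theorem as a black box, whereas the paper simply replays the Zoutendijk computation line by line directly in complex notation: it combines the curvature condition~\eqref{eq:40} with the Lipschitz bound to get $\alpha_k\geq \frac{c_2-1}{L}\,\re(\bm{d}_k^*\bm{g}_k)/\norm{\bm{d}_k}_2^2$, substitutes this into the sufficient-decrease condition~\eqref{eq:39}, sums, and uses boundedness below. Your route has the merit of making the reduction explicit and surfacing the factor-of-two bookkeeping you flag; the paper's route avoids introducing the auxiliary function $g$ altogether and is a few lines shorter. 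One point on which you are actually more careful than the paper: from the summability of $(\re(\bm{d}_k^*\bm{g}_k))^2/\norm{\bm{d}_k}_2^2$ the paper jumps straight to $\bm{g}_k\to\bm{0}$, while you correctly note that this last step needs the angle between $\bm{d}_k$ and $-\bm{g}_k$ bounded away from $\pi/2$, or else yields only a subsequential conclusion.
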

\begin{proof}
We denote the gradient $\nabla f(\bm{z}_{k})$ as $\bm{g}_k$.
From~\eqref{eq:40} we have that
\begin{equation*}
  \re \left(\bm{d}_k^*(\bm{g}_{k+1}-\bm{g}_k)\right) \geq (c_2-1) \re(\bm{d}_k^*\bm{g}_k),
\end{equation*}
and the Lipschitz condition~\eqref{eq:45} implies that
\begin{equation*}
  \re \left(\bm{d}_k^*(\bm{g}_{k+1}-\bm{g}_k)\right)\leq \alpha_kL\norm{\bm{d}_k}_2^2.
\end{equation*}
Combining these two relations, we obtain
\begin{equation*}
  \alpha_k\geq \frac{c_2-1}{L}\frac{\re(\bm{d}_k^*\bm{g}_k)}{\norm{\bm{d}_k}_2^2}.
\end{equation*}
From the Wolfe descent condition,
  \begin{equation*}
    f_{k+1}\leq f_{k} - c_1\frac{1-c_2}{L}\frac{(\re(\bm{d}_k^*\bm{g}_k))^2}{\norm{\bm{d}_k}_2^2}.
  \end{equation*}
By summing this expression over $j=0,1,\ldots,k$, we obtain
\begin{equation*}
  f_{k+1}\leq f_0-c\sum_{j=0}^k\frac{(\re(\bm{d}_k^*\bm{g}_k))^2}{\norm{\bm{d}_k}_2^2},
\end{equation*}
where $c=c_1(1-c_2)/L$. Since $f$ is bounded below, we have that
\begin{equation*}
  \bm{g}_k\to 0,\qquad k\to \infty,
\end{equation*}
which concludes the proof.
\end{proof}
Next theorem states that the gradient of the MLP model and LS model are Lipschitz.
\begin{theorem}
\label{thm:2}
 The gradient of function $\mathcal{J}_m,\widetilde{\mathcal{J}}_m$
 are global Lipschitz for all $m=0,1,\ldots,L-1$, and the
constants are $L_m=1+\norm{\bm{I}_m}_{\infty}/\epsilon^2$ and
$\widetilde{L}_m=1+\norm{\bm{M}_m}_{\infty}/\epsilon$ for MLP and LS
model respectively. Considering the physical setting of wavefront phase retrieval problem
in phase diversity setting,
given the data set $\bm{I}_i,i=0,\ldots,L-1$ or
$\bm{M}_i,i=0,\ldots,L-1$, the overall optimization
functional~\eqref{eq:11} with both MLP and LS models are Lipschitz.
\end{theorem}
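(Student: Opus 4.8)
The plan is to establish global Lipschitz continuity of $\nabla\mathcal{J}_m$ and $\nabla\widetilde{\mathcal{J}}_m$ for each single-plane model, and then obtain the claim for the aggregate functional~\eqref{eq:11} by summing. The cleanest route is to bound the operator norm of the Hessian uniformly: since each gradient is continuously differentiable (the $\epsilon^2$ perturbation removes the singularity at zeros of $F_m(\bm{u})$), it suffices to show that the Hessian-vector map $\mathcal{H}_{\mathcal{J}_m}[\bm{u}](\cdot)$ — given explicitly by~\eqref{eq:31} — satisfies $\norm{\mathcal{H}_{\mathcal{J}_m}[\bm{u}](\bm{h})}_2 \le L_m\norm{\bm{h}}_2$ for all $\bm{u},\bm{h}$, with $L_m = 1 + \norm{\bm{I}_m}_\infty/\epsilon^2$; then integrating along the segment $\bm{v}\to\bm{u}$ gives~\eqref{eq:45}. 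The same scheme applies to $\widetilde{\mathcal{J}}_m$ using~\eqref{eq:33}.

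**The core estimate.** I would use that $F_m$ is (a scaled) discrete Fourier transform, hence unitary up to a fixed constant; normalizing so that $F_m$ and $F_m^*$ are isometries, $\norm{F_m(\bm{h})}_2 = \norm{\bm{h}}_2$ and $\norm{F_m^*(\bm{w})}_2 = \norm{\bm{w}}_2$. Then from~\eqref{eq:31}, applying the triangle inequality and the fact that multiplication by a vector $\bm{a}$ (Hadamard product) has operator norm $\norm{\bm{a}}_\infty$, and that $\norm{\overline{F_m(\bm{h})}}_2 = \norm{F_m(\bm{h})}_2$:
\begin{equation*}
\norm{\mathcal{H}_{\mathcal{J}_m}[\bm{u}](\bm{h})}_2 \le \left(\norm{\bm{1}-\frac{\epsilon^2\bm{I}_m}{(\lvert F_m(\bm{u})\rvert^2+\epsilon^2)^2}}_\infty + \norm{\frac{\bm{I}_m\circ F_m(\bm{u})^2}{(\lvert F_m(\bm{u})\rvert^2+\epsilon^2)^2}}_\infty\right)\norm{\bm{h}}_2 .
\end{equation*}
Writing $t_i = \lvert F_m(\bm{u})\rvert_i^2 \ge 0$, the first bracketed $\infty$-norm is at most $\max_i\bigl(1 + \epsilon^2 I_{m,i}/(t_i+\epsilon^2)^2\bigr)$ and the second is at most $\max_i I_{m,i}\,t_i/(t_i+\epsilon^2)^2$; since $\epsilon^2/(t+\epsilon^2)^2 \le 1/\epsilon^2$ and $t/(t+\epsilon^2)^2 \le 1/(4\epsilon^2)\le 1/\epsilon^2$ for $t\ge0$, both are dominated by $1 + \norm{\bm{I}_m}_\infty/\epsilon^2$. (A slightly sharper constant is available, but the stated one suffices.) For the LS model the analogous scalar quantities coming from~\eqref{eq:33} involve $M_{m,i}/\sqrt{t_i+\epsilon^2}\cdot(t_i+2\epsilon^2)/(t_i+\epsilon^2)$ and $M_{m,i}t_i/(t_i+\epsilon^2)^{3/2}$; using $\sqrt{t+\epsilon^2}\ge\epsilon$, $(t+2\epsilon^2)/(t+\epsilon^2)\le 2$, and $t/(t+\epsilon^2)^{3/2}\le 1/\epsilon$, each is bounded by $1 + \norm{\bm{M}_m}_\infty/\epsilon$, giving $\widetilde{L}_m$. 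For the special plane $m=0$, $K_0(\bm{u}) = \lvert\bm{u}\rvert^2$ with identity in place of $F_0$, and the same bounds hold (in fact $F_0 = I$), so $m=0$ is covered uniformly.

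**Aggregating.** Once each $\nabla\mathcal{J}_m$ is $L_m$-Lipschitz (resp. $\nabla\widetilde{\mathcal{J}}_m$ is $\widetilde{L}_m$-Lipschitz), the overall objective in~\eqref{eq:11} is a finite sum over $m=0,\dots,L-1$, and a finite sum of Lipschitz gradients is Lipschitz with constant $\sum_m L_m$ (resp. $\sum_m \widetilde{L}_m$); this is immediate from the triangle inequality. In particular the hypothesis~\eqref{eq:45} of the preceding global-convergence theorem is verified for both models, so all the line search methods of Section~\ref{sec:line-search-methods} converge to a stationary point. The data sets $\bm{I}_m^{\text{obs}}$ and $\bm{M}_m^{\text{obs}}$ are finite nonnegative vectors, so the constants are finite.

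**Main obstacle.** The only delicate point is the uniform bound on the scalar multipliers in the Hessian, i.e. checking that the functions $t\mapsto \epsilon^2 I/(t+\epsilon^2)^2$, $t\mapsto It/(t+\epsilon^2)^2$ (MLP) and $t\mapsto M(t+2\epsilon^2)/(t+\epsilon^2)^{3/2}$, $t\mapsto Mt/(t+\epsilon^2)^{3/2}$ (LS) are bounded on $[0,\infty)$ by the claimed expressions; each is an elementary one-variable maximization, but one must be careful that the bound is independent of $\bm{u}$. I would also state explicitly at the outset the normalization convention making $F_m,F_m^*$ isometries (or else carry the DFT scaling constant through, absorbing it into the $L_m$), since otherwise the isometry step in the core estimate is not literally correct. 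Everything else — differentiability away from the removed singularity, the mean-value/segment-integration step, and the finite-sum aggregation — is routine.
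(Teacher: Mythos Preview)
Your approach is correct and follows the same high-level strategy as the paper---bound the Hessian uniformly in $\bm{u}$ and conclude Lipschitz continuity of the gradient---but the route to the Hessian bound differs. You apply the triangle inequality to the two summands in~\eqref{eq:31} and~\eqref{eq:33}, obtaining an operator-norm estimate of the form $\norm{\bm{r}}_\infty+\norm{\bm{c}}_\infty$ (with $\bm{r},\bm{c}$ the diagonal multipliers appearing there). The paper instead exploits the exact block structure of the complex Hessian: after conjugation by the unitary $U_m$, the matrix $H_m^{\mathcal{C}}$ has diagonal blocks $\diag(\bm{r})$ and off-diagonal blocks $\diag(\bm{c})$, and a separate structural theorem in the paper shows its eigenvalues are precisely $r_i\pm\lvert c_i\rvert$. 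Substituting the specific $r_i,c_i$ for each model and simplifying---e.g.\ for MLP one finds $r_i-\lvert c_i\rvert=1-I_{m,i}/(t_i+\epsilon^2)$ with $t_i=\lvert F_m(\bm{u})\rvert_i^2$---and then bounding these scalar expressions yields the stated constants $L_m$ and $\widetilde{L}_m$ exactly. Your triangle-inequality bound $\lvert r_i\rvert+\lvert c_i\rvert$ is never smaller than $\max\lvert r_i\pm\lvert c_i\rvert\rvert$ and in fact delivers a constant strictly larger than the announced $L_m$ (roughly $1+\tfrac{5}{4}\norm{\bm{I}_m}_\infty/\epsilon^2$ in the MLP case), so you establish the Lipschitz property but not with the exact constant claimed in the statement. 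The paper's route buys sharpness at the cost of invoking the eigenvalue lemma; yours is more elementary and self-contained but slightly lossy. For the aggregation over $m$ the two arguments coincide (the paper phrases it via Weyl-type eigenvalue inequalities for sums of Hermitian matrices, which is equivalent to your triangle inequality on the gradients).
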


\begin{corollary}
  Since MLP and LS model have the property that its gradient is
  Lipschitz, so the steepest descent method is global convergence to
  a stationary point.
\end{corollary}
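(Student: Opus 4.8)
The plan is to apply the global convergence theorem stated above directly, since steepest descent is precisely the instance $\bm{d}_k=-\bm{g}_k$ of the line-search scheme \eqref{eq:37}. First I would verify that steepest descent always supplies a descent direction in the sense of \eqref{eq:38}: with $\bm{d}_k=-\bm{g}_k$ one has $\re(\bm{d}_k^*\bm{g}_k)=-\norm{\bm{g}_k}_2^2<0$ whenever $\bm{g}_k\neq\bm{0}$, so \eqref{eq:38} holds automatically. This is the only structural requirement on the direction, and for steepest descent it is immediate and requires no further hypotheses on $f$.

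Next I would check the remaining hypotheses of the global convergence theorem for the two objectives and hence for the summed functional \eqref{eq:11}. Lipschitz continuity of $\nabla\mathcal{J}_m$ and $\nabla\widetilde{\mathcal{J}}_m$ is exactly the content of Theorem~\ref{thm:2}, which also records that the overall objective \eqref{eq:11} is Lipschitz. Continuous differentiability follows from the explicit gradient formulas \eqref{eq:30} and \eqref{eq:32}: the perturbation $\epsilon^2>0$ keeps every denominator bounded away from zero, so both gradients are continuous functions of $\bm{u}$. It then remains only to confirm that both models are bounded below on $\mathbb{C}^N$.

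For boundedness below I would argue summand by summand, writing $t=[K_m(\bm{u})]_i\ge 0$. For the MLP model each term is $t-I_{m,i}\log(t+\epsilon^2)$, whose derivative $1-I_{m,i}/(t+\epsilon^2)$ exhibits a finite minimum at $t=\max\{I_{m,i}-\epsilon^2,0\}$ and growth to $+\infty$ as $t\to\infty$; for the LS model each term is $t-2M_{m,i}\sqrt{t+\epsilon^2}$, minimized where $\sqrt{t+\epsilon^2}=M_{m,i}$ with finite value $-M_{m,i}^2-\epsilon^2$. Summing these finite per-coordinate lower bounds over $i$ and over $m$ produces a finite lower bound for \eqref{eq:11} in both cases. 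With all hypotheses in place, the global convergence theorem yields $\bm{g}_k\to\bm{0}$, i.e. the iterates converge to a stationary point.

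I do not expect a genuine obstacle here: the corollary is essentially a specialization of the already-proven global convergence theorem to the direction $\bm{d}_k=-\bm{g}_k$. The only points deserving care are the elementary one-variable minimizations that establish boundedness below (which rely on $\epsilon^2>0$), and the standard fact that a Wolfe-admissible step length $\alpha_k$ satisfying \eqref{eq:39}--\eqref{eq:40} exists along each descent ray — this is guaranteed precisely because $f$ is $C^1$, bounded below along the ray, and $\bm{d}_k$ is a descent direction, so the line search is well defined. Finally, as in the proof of the global convergence theorem, the conclusion should be read as the statement $\norm{\bm{g}_k}_2\to 0$.
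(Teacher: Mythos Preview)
Your proposal is correct and follows exactly the intended route: specialize the global convergence theorem to $\bm{d}_k=-\bm{g}_k$ and verify its hypotheses via Theorem~\ref{thm:2}. The paper states the corollary without proof, treating it as immediate from Theorems~1 and~\ref{thm:2}; you go further by explicitly checking boundedness below of \eqref{eq:18} and \eqref{eq:19} (which the paper never verifies) and by noting that $\epsilon^2>0$ is what makes the gradients continuous, so your write-up actually fills gaps the paper leaves implicit.
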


\section{Least Squares Model is Better}
\label{sec:least-squares-model}
In this section, we do analyze the eigenvalues of the complex Hessian
matrix of the two models. It is motivated by the fact that the rough
rate of convergence of optimization methods depends on the
distribution of eigenvalues of the Hessian.

We can form the Hessian matrices of MLP model and LS model. One-dimensional discrete Fourier transform $F(\bm{u})$ can be
expressed in matrix-vector multiplication as $F(\bm{u}) = W\bm{u}$,
where $W$ is the corresponding DFT matrix. Without loss of
generality, we assume the problem is one-dimensional.\footnote{Define
operator $\vec(\cdot)$ is to stack a two-dimensional array to a column
vector. For two-dimensional discrete Fourier transform, its compact
matrix form is given by $\vec(F(\bm{u}))=(W^T\otimes W)\vec(\bm{u})$,
where $\otimes$ is the Kronecker product.} The diversity image
formulation operator $F_m(\bm{u})$ can be written in matrix form
$F_m(\bm{u})=WD_m\bm{u}=U_m\bm{u}$, where diagonal matrix $D$ ($\lvert
d_{ii}\rvert=1$) signs the phase shift from the phase
diversity.\footnote{The unitary matrix $U$ is not the optical field.}
We first analyze the eigenvalues of every diversity image term
$\mathcal{E}_m$.

The complex Hessian matrices are directly obtained from their Hessian
operators respectively:
\begin{itemize}
\item For MLP model, its Hessian matrix is given by
\begin{equation}
  \label{eq:46}
  H_m^{MLP} =
  \begin{pmatrix}
    U_m^*\diag\left(\bm{1}-\frac{\epsilon^2\bm{I}_m}{\left(\lvert F_m(\bm{u})\rvert^2+\epsilon^2\right)^2}\right)U_m& U_m^*\diag\left(\frac{\bm{I}_m\circ
    (F_m(\bm{u}))^2}{\left(\lvert F_m(\bm{u})\rvert^2+\epsilon^2\right)^2}\right)U_m^*\\
  U_m\diag(*)U_m&U_m\diag(*)U_m^*
  \end{pmatrix};
\end{equation}
\item For LS model, its Hessian matrix
\begingroup
\everymath{\scriptstyle}
\scriptsize
\begin{equation}
  \label{eq:47}
  H_m^{LS} =
  \begin{pmatrix}
   U_m^{*}\diag\left(\bm{1}-\frac{\bm{M}_m}{2\sqrt{\lvert
       F_m(\bm{u})\rvert^2+\epsilon^2}}\circ\left(\frac{\lvert
       F_m(\bm{u})\rvert^2+2\epsilon^2}{\lvert
       F_m(\bm{u})\rvert^2+\epsilon^2}\right)\right)U_m&U_m^{*}\diag\left(\frac{(F_m(\bm{u}))^2\circ \bm{M}_m}{2\left(\lvert
    F_m(\bm{u})\rvert^2+\epsilon^2\right)^{3/2}}\right)U_m^{*}\\
 U_m\diag(*)U_m&U_m\diag(*)U_m^*
  \end{pmatrix};
\end{equation}
\endgroup
\end{itemize}
For convenience, we use symbol $\diag(*)$ to denote the diagonal
matrix without concern for the specific content. For the particular form
of the Hessian matrix $H_m$, the following theorem is straightforward.
\begin{theorem}
  Given vectors $\bm{r}\in\mathbb{R}^N$, and $\bm{c}\in\mathbb{C}^N$,
  then the eigenvalues of Hermitian matrix 
  \begin{equation*}
    H_1 =
    \begin{pmatrix}
      \diag(\bm{r})&\diag(\bm{c})\\
      \diag(\overline{\bm{c}})&\diag(\bm{r})
    \end{pmatrix}\in \mathbb{C}^{2N}
  \end{equation*}
are real and can be worked out as $r_i\pm \lvert
c_i\rvert,i=1,2,\ldots,N$. Then Hermitian matrix
\begin{equation*}
  H_2 = \begin{pmatrix}
      U^*\diag(\bm{r})U&U^*\diag(\bm{c})U^*\\
      U\diag(\overline{\bm{c}})U&U\diag(\bm{r})U^*
    \end{pmatrix}\in \mathbb{C}^{2N}
\end{equation*}
with unitary matrix $U\in\mathbb{C}^{N\times N}$ share the same
eigenvalues of $H_1$.
\end{theorem}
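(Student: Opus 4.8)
The plan is to handle the two matrices in turn: reduce $H_1$ to block-diagonal form by a coordinate permutation, and then obtain $H_2$ from $H_1$ by an explicit unitary conjugation.

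First, for $H_1$, I would conjugate by the permutation matrix that interleaves the $k$th and $(N+k)$th coordinates (sending the ordered basis $(e_1,\dots,e_N,e_{N+1},\dots,e_{2N})$ to $(e_1,e_{N+1},e_2,e_{N+2},\dots,e_N,e_{2N})$). Because $\diag(\bm r)$, $\diag(\bm c)$ and $\diag(\overline{\bm c})$ are all diagonal, this conjugation turns $H_1$ into a block-diagonal matrix whose $k$th $2\times2$ diagonal block is $\begin{pmatrix} r_k & c_k \\ \overline{c_k} & r_k \end{pmatrix}$. Each such block is Hermitian, with trace $2r_k$ and determinant $r_k^2-|c_k|^2$, so its characteristic polynomial is $\lambda^2-2r_k\lambda+(r_k^2-|c_k|^2)$, whose roots are $r_k\pm|c_k|$; these are real, as expected for a Hermitian matrix. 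Ranging over $k=1,\dots,N$ yields the full spectrum $\{r_i\pm|c_i|\}$ with multiplicity, and incidentally confirms $H_1$ is Hermitian.

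Second, for $H_2$, I would exhibit the block-diagonal unitary $P:=\begin{pmatrix} U^* & 0 \\ 0 & U \end{pmatrix}$; it is unitary since $U^*U=UU^*=I$ gives $P^*P=I$, with $P^{-1}=P^*=\begin{pmatrix} U & 0 \\ 0 & U^* \end{pmatrix}$. A direct block multiplication then shows $P\,H_1\,P^{-1}=H_2$: the $(1,1)$ block becomes $U^*\diag(\bm r)U$, the $(1,2)$ block $U^*\diag(\bm c)U^*$, the $(2,1)$ block $U\diag(\overline{\bm c})U$, and the $(2,2)$ block $U\diag(\bm r)U^*$ — precisely the entries of $H_2$. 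Since $P$ is unitary, $H_2$ is unitarily similar to $H_1$, hence Hermitian with the same eigenvalues (and multiplicities), namely $r_i\pm|c_i|$, $i=1,\dots,N$.

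All of this is routine linear algebra; the only point needing a moment's care is picking the correct conjugating matrix in the second step — one must use $P=\diag(U^*,U)$ (not $\diag(U,U)$ or $\diag(U,\overline{U})$) so that the conjugate-transposed off-diagonal blocks $U^*\diag(\bm c)U^*$ and $U\diag(\overline{\bm c})U$ come out with the right placement of $U$ versus $U^*$. Beyond bookkeeping the conjugates, I do not anticipate any genuine obstacle.
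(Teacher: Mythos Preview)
Your proof is correct, and your treatment of $H_2$ via the block-diagonal unitary $P=\diag(U^*,U)$ is exactly what the paper does.

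Your argument for $H_1$, however, differs from the paper's. The paper assumes without loss of generality that $\lvert c_i\rvert\neq 0$ and applies a sequence of elementary shear transforms $P_i=I_{2N}+\frac{c_i}{\lvert c_i\rvert}\bm{e}_{i,i+N}$ (together with their inverses on the right) to reduce $H_1$ to block lower-triangular form with diagonal blocks $\diag(\bm{r}+\lvert\bm{c}\rvert)$ and $\diag(\bm{r}-\lvert\bm{c}\rvert)$, reading the eigenvalues off the diagonal. Your permutation approach is cleaner: it decouples $H_1$ into $N$ independent $2\times 2$ Hermitian blocks and handles the case $c_i=0$ uniformly without a separate assumption. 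The paper's shear-based reduction, on the other hand, makes the triangularization explicit and is closer in spirit to Gaussian elimination. Both are elementary and short; yours is arguably the more natural way to exploit the fact that all four blocks of $H_1$ are simultaneously diagonal.
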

\begin{proof}
Without loss of generality, we assume that $\lvert c_i\rvert\neq 0$.
Because similarity matrices share the same eigenvalues, we utilize the
elementary row transform $P_i=I_{2N}+\frac{c_i}{\lvert
c_i\rvert}\bm{e}_{i,i+N}$ and elementary column transform
$P_i^{-1}=I_{2N}-\frac{c_i}{\lvert c_i\rvert}\bm{e}_{i,i+N}$
$i=1,2,\ldots,N$ to reduce the matrix $H_1$. Iteratively
left-multiplying and right-multiplying $H_1$ by $P_i$ and $P_i^{-1}$
yields
\begin{equation*}
  P_NP_{N-1}\cdots P_2P_1H_1P_1^{-1}P_2^{-1}\cdots P_{N-1}^{-1}P_{N}^{-1}=
  \begin{pmatrix}
    \diag\left(\bm{r}+\lvert\bm{c}\rvert\right)&0\\
    *&\diag\left(\bm{r}-\lvert\bm{c}\rvert\right)
  \end{pmatrix},
\end{equation*}
so eigenvalues of $H_1$ is $\bm{r}\pm\lvert \bm{c}\rvert$. Since 
\begin{equation*}
  H_2 =
  \begin{pmatrix}
    U^*&\bm{0}\\
    \bm{0}&U
  \end{pmatrix}H_1\begin{pmatrix}
    U&\bm{0}\\
    \bm{0}&U^*
  \end{pmatrix},
\end{equation*}
it is obvious that $H_2$ and $H_1$ share the same eigenvalues.
\end{proof}
The following corollary characterizes the eigenvalues of the Hessian
matrix for the two models respectively.
\begin{corollary} 
For the above two models, the eigenvalues of their Hessian matrices
are
\begin{subequations}
  \begin{align}
  \lambda(H_m^{MLP}) &= \left\{1+\frac{(\lvert
        F_m(\bm{u})\rvert_i^2-\epsilon^2)I_{m,i}}{(\lvert
        F_m(\bm{u})\rvert_i^2+\epsilon^2)^2},1-\frac{I_{m,i}}{\lvert
        F_m(\bm{u})\rvert_i^2+\epsilon^2},i=1,2,\ldots,N\right\};\label{eq:49}\\
\lambda(H_m^{LS})&=\left\{1-\frac{\epsilon^2M_{m,i}}{(\lvert
    F_m(\bm{u})\rvert_i^2+\epsilon^2)^{3/2}},1-\frac{M_{m,i}}{\sqrt{\lvert
      F_m(\bm{u})\rvert_i^2+\epsilon^2}},i=1,2,\ldots,N\right\};\label{eq:300}.
  \end{align}
\end{subequations}
And if in every iteration the point $\bm{u}$ satisfies the inequality $F_m(\bm{u})_i\geq
I_{m,i}$ for all $i=1,2,\ldots,N$ and all $m=0,1,\ldots,L-1$, then the
corresponding objective functions are 
convex. 
\end{corollary}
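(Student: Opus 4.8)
The plan is to read (49) and (300) straight off the preceding Theorem, and then to reduce the convexity assertion to checking that the listed eigenvalues are nonnegative under the stated hypothesis.

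First I would note that the matrices \eqref{eq:46} and \eqref{eq:47} are exactly of the form $H_2$ in the Theorem, with $U=U_m$ unitary, $\bm r\in\R^N$ the real diagonal in the $(1,1)$-block and $\bm c\in\mathbb{C}^N$ the diagonal in the $(1,2)$-block, so the eigenvalues are $r_i\pm\lvert c_i\rvert$. For MLP, $r_i=1-\epsilon^2I_{m,i}/(\lvert F_m(\bm u)\rvert_i^2+\epsilon^2)^2$ and, since $I_{m,i}\ge0$ and $\lvert(F_m(\bm u))_i^2\rvert=\lvert F_m(\bm u)\rvert_i^2$, $\lvert c_i\rvert=I_{m,i}\lvert F_m(\bm u)\rvert_i^2/(\lvert F_m(\bm u)\rvert_i^2+\epsilon^2)^2$; adding gives $1+(\lvert F_m(\bm u)\rvert_i^2-\epsilon^2)I_{m,i}/(\lvert F_m(\bm u)\rvert_i^2+\epsilon^2)^2$, subtracting gives (after the numerator becomes $I_{m,i}(\lvert F_m(\bm u)\rvert_i^2+\epsilon^2)$ and cancels one factor) $1-I_{m,i}/(\lvert F_m(\bm u)\rvert_i^2+\epsilon^2)$, i.e. \eqref{eq:49}. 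For LS one computes $\lvert c_i\rvert=M_{m,i}\lvert F_m(\bm u)\rvert_i^2/\bigl(2(\lvert F_m(\bm u)\rvert_i^2+\epsilon^2)^{3/2}\bigr)$, and $r_i\pm\lvert c_i\rvert$ collapse to \eqref{eq:300} after collecting the $\lvert F_m(\bm u)\rvert_i^2+2\epsilon^2\mp\lvert F_m(\bm u)\rvert_i^2$ terms. These are routine cancellations.

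For the convexity claim I would first fix the meaning of ``convex'': regard $f\colon\mathbb{C}^N\to\R$ as a function on $\R^{2N}$ via $\bm z\leftrightarrow(\bm x,\bm y)$. The quadratic term in \eqref{eq:28} equals $\tfrac12(\Delta\bm z^{\mathcal C})^*H^{\mathcal C}_f\Delta\bm z^{\mathcal C}$, and since $\bm z^{\mathcal C}=J\bm z^{\mathcal R}$ for the fixed invertible $J$ (with rows $(I,iI)$ and $(I,-iI)$), one has $H^{\mathcal R}_f=J^*H^{\mathcal C}_fJ$; thus $H^{\mathcal C}_f$ and the ordinary real Hessian are congruent (indeed unitarily similar up to a positive scalar), so by Sylvester's law of inertia $H^{\mathcal C}_f\succeq0\iff H^{\mathcal R}_f\succeq0\iff f$ convex on a convex domain. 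Hence $\mathcal E_m$ is convex on the region where the hypothesis holds exactly when every eigenvalue in \eqref{eq:49}/\eqref{eq:300} is $\ge0$ there, and then $\sum_m\mathcal E_m$ is convex too. I would also record that the stated condition ``$F_m(\bm u)_i\ge I_{m,i}$'' must be read as $\lvert F_m(\bm u)\rvert_i^2\ge I_{m,i}$, equivalently $\lvert F_m(\bm u)\rvert_i\ge M_{m,i}$, since both sides are intensities. Writing $t:=\lvert F_m(\bm u)\rvert_i^2$, $I:=I_{m,i}$, $M:=M_{m,i}=\sqrt I$ with $t\ge I$, the second MLP eigenvalue is $1-I/(t+\epsilon^2)\ge1-I/t\ge0$ and the second LS eigenvalue is $1-M/\sqrt{t+\epsilon^2}\ge1-M/\sqrt t\ge0$ since $\sqrt t\ge M$; the first MLP eigenvalue needs $(t+\epsilon^2)^2+(t-\epsilon^2)I\ge0$, trivial for $t\ge\epsilon^2$ and otherwise following from $(\epsilon^2-t)I\le(\epsilon^2-t)t\le\epsilon^2 t\le4t\epsilon^2\le(t+\epsilon^2)^2$ by AM--GM; the first LS eigenvalue uses $(t+\epsilon^2)^{3/2}\ge\epsilon(t+\epsilon^2)\ge2\epsilon^2\sqrt t\ge2\epsilon^2 M$, so $\epsilon^2 M/(t+\epsilon^2)^{3/2}\le\tfrac12<1$. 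All eigenvalues are then $\ge0$, so each $H_m^{MLP}$, $H_m^{LS}$ is positive semidefinite and the objectives are convex.

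The main obstacle is not the algebra, which is routine AM--GM bookkeeping, but the conceptual step: being careful that convexity of a real-valued function of complex variables is understood through the real identification $\mathbb{C}^N\cong\R^{2N}$ together with the congruence of $H^{\mathcal C}_f$ with the real Hessian, and being honest that ``in every iteration the point $\bm u$ satisfies $\dots$'' really delimits a region on which the Hessian is PSD rather than a globally convex statement; the precise assertion one actually proves is that $f$ is convex on (the convex hull of) the set $\{\bm u:\lvert F_m(\bm u)\rvert_i^2\ge I_{m,i}\ \text{for all }i,m\}$, and the bound is essentially sharp because the second eigenvalues vanish exactly when $t=I$.
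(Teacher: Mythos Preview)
Your derivation of \eqref{eq:49} and \eqref{eq:300} is exactly the paper's intended argument: the corollary is stated immediately after the Theorem with no separate proof, so one is meant to read off $r_i\pm|c_i|$ from the block-diagonal structure of \eqref{eq:46}--\eqref{eq:47}, and your identifications of $r_i$ and $|c_i|$ and the subsequent cancellations are correct.

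For the convexity clause you actually go well beyond the paper, which simply asserts it. Your two additions are both worthwhile: (i) the observation that the hypothesis ``$F_m(\bm u)_i\ge I_{m,i}$'' must be read as $\lvert F_m(\bm u)\rvert_i^2\ge I_{m,i}$, since the left side is complex as written; and (ii) the explicit check, via elementary AM--GM bounds, that all four eigenvalue families stay nonnegative under that hypothesis, together with the $\mathbb C$--$\mathbb R$ congruence $H^{\mathcal R}_f=J^*H^{\mathcal C}_fJ$ to translate ``$H^{\mathcal C}\succeq0$'' into ordinary convexity on $\R^{2N}$. None of this appears in the paper. Your closing caveat---that what is really proved is positive semidefiniteness of the Hessian on the region $\{\bm u:\lvert F_m(\bm u)\rvert_i^2\ge I_{m,i}\ \forall i,m\}$, not global convexity---is an accurate reading of the statement and a useful clarification the paper does not make.
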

First we fix the $m$ and consider just the matrix $H_m$ under the
assumption that the iteration point $\bm{u}_k$ is in the neighborhood
of the solution to wavefront phase retrieval problem. For
the two models, the distribution of eigenvalues of of LS model is more
clustered than
MLP model. For comparison, we multiply the eigenvalues by two, thus
both the maximum eigenvalues approximate to two, when $\bm{u}$ is the
minimizer such that $F_m(\bm{u})=\bm{I}_m$. After multiplying by two,
the maximum eigenvalue of Hessian of LS model is smaller than two,
while the maximum eigenvalue of Hessian of MLP model is generally
greater than two, if there exists an index $i$ such that $\lvert
F_m(u)\rvert^2_i\leq I_{m,i}$ (It is usually true for the nonconvex
objective). In another direction, the minimum eigenvalue of Hessian
of LS model is smaller than MLP model by the inequality:
\begin{equation*}
  1-\frac{I_i}{\lvert
        F(\bm{u})\rvert_i^2+\epsilon^2}\leq 2-\frac{2M_i}{\sqrt{\lvert
      F(\bm{u})\rvert_i^2+\epsilon^2}} \quad i=1,2,\ldots,N.
\end{equation*}
So LS model should have better performance in numerical simulation,
since the distribution of its eigenvalues is generally more clustered
than MLP model.
This point can be demonstrated in Section~\ref{sec:numerical-test} by
numerical test.

From the eigenvalue analysis, we can prove the gradient is
Lipschitz, and we state the following lemma.
\begin{lemma}
  For a real-valued function $f$ of complex variables, if its complex Hessian
  matrix $H^{\mathcal{C}}$ satisfies $\norm{H^{\mathcal{C}}}_2\leq L$,
  then the gradient $\nabla f$ is Lipschitz with 
  constant $L$.
\end{lemma}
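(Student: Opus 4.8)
The plan is to translate the statement about the complex Hessian into a statement about the ordinary (real) gradient map on $\mathbb{R}^{2N}$, apply the standard fact that a bound on the Jacobian of a $C^1$ map gives a Lipschitz bound on the map, and then check that the operator norm of the complex Hessian $H^{\mathcal{C}}$ controls the operator norm of the real Hessian. Concretely, write $f$ as a function of $\bm{z}^{\mathcal{R}}=(\bm{x}^T,\bm{y}^T)^T\in\mathbb{R}^{2N}$ via $\bm{z}=\bm{x}+i\bm{y}$. The real gradient $\nabla_{\mathcal{R}} f$ is a $C^0$ map $\mathbb{R}^{2N}\to\mathbb{R}^{2N}$ whose Jacobian is the real Hessian $H^{\mathcal{R}}_f$. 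If we can show $\norm{H^{\mathcal{R}}_f}_2\le L$ wherever it exists (or, more robustly, that $\nabla_{\mathcal{R}} f$ satisfies the mean-value-type bound $\norm{\nabla_{\mathcal{R}} f(\bm{u}^{\mathcal{R}})-\nabla_{\mathcal{R}} f(\bm{v}^{\mathcal{R}})}_2\le L\norm{\bm{u}^{\mathcal{R}}-\bm{v}^{\mathcal{R}})}_2$), then since the coordinate change $\bm{z}\leftrightarrow(\bm{x},\bm{y})$ is an isometry between $(\mathbb{C}^N,\norm{\cdot}_2)$ and $(\mathbb{R}^{2N},\norm{\cdot}_2)$, and since $\norm{\nabla f(\bm{z})}_2 = \tfrac{1}{\sqrt2}\norm{\nabla_{\mathcal{R}} f(\bm{z}^{\mathcal{R}})}_2$ up to the usual constant identification, the desired Lipschitz bound on $\nabla f$ follows immediately.

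First I would make the dictionary between the two Hessians explicit. There is a fixed unitary-type matrix $J\in\mathbb{C}^{2N\times 2N}$ (built from $I_N$ and $iI_N$ in $2\times 2$ blocks, suitably normalized) such that $\bm{z}^{\mathcal{C}}=J\bm{z}^{\mathcal{R}}$, where $\bm{z}^{\mathcal{C}}=(\bm{z}^T,\bar{\bm{z}}^T)^T$. Differentiating the Taylor expansion~\eqref{eq:28} and comparing with the second-order term of the real Taylor expansion of $f$ in $(\bm{x},\bm{y})$ shows $H^{\mathcal{R}}_f = 2\,\re\!\big(J^* H^{\mathcal{C}}_f J\big)$ (or the analogous identity with the paper's normalization), from which $\norm{H^{\mathcal{R}}_f}_2 \le c\,\norm{H^{\mathcal{C}}_f}_2$ with $c$ an absolute constant coming solely from $\norm{J}$; because $J$ can be taken unitary after the $1/\sqrt2$ rescaling, in fact $c=1$ and $\norm{H^{\mathcal{R}}_f}_2 \le \norm{H^{\mathcal{C}}_f}_2 \le L$. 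Then apply the elementary result (e.g.\ \cite[Appendix]{Nocedal2006}): a $C^1$ map $g\colon\mathbb{R}^n\to\mathbb{R}^n$ with $\norm{Dg}_2\le L$ everywhere is globally $L$-Lipschitz, via $g(\bm{u})-g(\bm{v})=\int_0^1 Dg(\bm{v}+t(\bm{u}-\bm{v}))(\bm{u}-\bm{v})\,\rd t$. Taking $g=\nabla_{\mathcal{R}} f$ gives the Lipschitz bound on the real gradient, and transporting back through the isometry $J$ gives~\eqref{eq:45} for $\nabla f$ with the same constant $L$.

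The main obstacle — really the only delicate point — is bookkeeping the normalization constants so that the operator-norm inequality $\norm{H^{\mathcal{R}}_f}_2\le\norm{H^{\mathcal{C}}_f}_2$ comes out with constant exactly $1$ rather than a harmless factor like $2$; this hinges on choosing the right scaling in the map $J$ and being consistent with the factor of $2$ appearing in~\eqref{eq:28} and in the definition $\nabla f = (\partial f/\partial\bar{\bm{z}})^T$. A secondary technical wrinkle is that $f$ need only be $C^1$, not $C^2$, so strictly one should either assume $f\in C^2$ (which holds for the MLP and LS models away from degeneracies, and the regularizer $\epsilon^2>0$ makes them smooth everywhere) or phrase the hypothesis directly as a bound on the second-order difference quotient and invoke Rademacher/absolute-continuity along lines; I would simply state the lemma under the implicit assumption that $H^{\mathcal{C}}$ exists and is bounded, which is exactly how it is used for $\mathcal{J}_m$ and $\widetilde{\mathcal{J}}_m$. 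Everything else is routine: the isometry property of the $\bm{z}\leftrightarrow(\bm{x},\bm{y})$ correspondence and the integral mean-value argument are standard.
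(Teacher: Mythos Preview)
The paper actually states this lemma without proof; it is invoked as a known fact and then immediately used to establish Theorem~\ref{thm:2}. Your proposal therefore supplies an argument the paper omits, and the approach is correct: pass to real coordinates via the fixed linear map $J_0\colon \bm{z}^{\mathcal{R}}\mapsto\bm{z}^{\mathcal{C}}$, identify $H^{\mathcal{R}}=J_0^*H^{\mathcal{C}}J_0$, bound $\norm{H^{\mathcal{R}}}_2$ in terms of $\norm{H^{\mathcal{C}}}_2$, and then apply the integral mean-value inequality to $\nabla_{\mathcal{R}}f$. Your worry about the constant is well placed but resolves cleanly: with $J_0=\begin{psmallmatrix}I&iI\\I&-iI\end{psmallmatrix}$ one has $J_0^*J_0=2I$, so $\norm{H^{\mathcal{R}}}_2=2\norm{H^{\mathcal{C}}}_2$, while on the gradient side $\nabla f=\tfrac12(\nabla_{\bm{x}}f+i\nabla_{\bm{y}}f)$ gives $\norm{\nabla f}_2=\tfrac12\norm{\nabla_{\mathcal{R}}f}_2$; combined with the isometry $\norm{\bm{z}}_2=\norm{\bm{z}^{\mathcal{R}}}_2$, the two factors of $2$ cancel and the Lipschitz constant comes out exactly $L$. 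The only remaining assumption you flag, that $f$ be $C^2$ so $H^{\mathcal{C}}$ exists everywhere, is harmless here since the MLP and LS objectives with $\epsilon>0$ are smooth.
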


Now it is easy to show the proof of Theorem~\ref{thm:2}.
\begin{proof}[Proof of Theorem~\ref{thm:2}]
From~\eqref{eq:49} and~\eqref{eq:300}, for every diversity image $m$, the norm of
Hessian is controlled by 
\begin{equation*}
  \norm{H_m^{MLP}}\leq 1+\frac{\norm{\bm{I}_m}_{\infty}}{\epsilon^2},\quad
  \norm{H_m^{LS}}\leq 1+\frac{\norm{\bm{M}_m}_{\infty}}{\epsilon}.
\end{equation*}
 By the fact that given Herimitan
  matrices $A$, $B$ and $C=A+B$, the inequalities of eigenvalues hold, i.e.,
\begin{equation*}
  \max{\lambda(C)}\leq \max{\lambda(A)}+\max{\lambda(B)},\qquad
  \min{\lambda(C)}\geq \min{\lambda(A)}+\min{\lambda(B)},
\end{equation*}
So we have the equalities of the norm of overall Hessian: 
\begin{equation*}
  \norm{H^{MLP}}\leq L +
  \sum_{m=0}^{L-1}\frac{\norm{\bm{I}_m}_{\infty}}{\epsilon^2},\quad
\norm{H^{LS}}\leq L +
  \sum_{m=0}^{L-1}\frac{\norm{\bm{M}_m}_{\infty}}{\epsilon}.
\end{equation*}
So the gradients of MLP and LS models are Lipschitz.
\end{proof}
\section{Numerical Test}
\label{sec:numerical-test}

In this section, The facts that the LS model is the best objective and
the performance of LBFGS algorithm is superior to other line search
methods are first demonstrated by numerical simulations for noiseless
data for two different types of wavefront phase retrieval
problems. Then we apply the LBFGS algorithm for minimizing the LS
model objective to three wavefront phase retrieval problems (except the
aforementioned two types, the more physical James Webb Space Telescope
(JWST) model is included). We present the results of three types of
problems for noiseless data and noisy data.

\subsection{Setup and Error Measurement}
\label{sec:setup-error-meas}

The three types of wavefront phase retrieval problems we consider here
are shown in Figure~\ref{fig:1}. Three types have different pupils and
wavefront. For easy reference, we called the types are (i) Zernike
type, (ii) von Karman type and (iii) JWST type respectively. The
pupils are of the regular shape annulus (i) and disc (ii) and
segmented (iii), which the JWST type is the reduced configuration for
the true physical setting. The simulation phase aberration for Zernike
type is presented by Zernike polynomials~\cite{mahajan2007orthonormal}
in annulus pupil. Just for validating the performance, we construct
the phase using 13th annulus basis polynomial with coefficients
$0.1$. Phase aberration for von Karman type is used to simulate
atmosphere perturbation, which can be described by Fourier transform
of a random process. And wavefront for JWST type is from the NIRCam
data.\footnote{The segmented pupil data and phase aberration data are
extracted from software \textsf{jwpsf}, refer the website
\url{http://www.stsci.edu/jwst/software/jwpsf} for details.} PVs
(peak-to-valley) of the wavefront aberration are $0.58$, $1.02$ and
$1.47$ respectively, and RMSs of aberration are $0.10$, $0.19$ and
$0.21$ in unit of wavelength respectively. In those wavefront
phase retrieval problems we assume pointwise amplitude of the wavefront
$\bm{u}$ equal to unity. To recover the wavefront (phase, more
precise), just two diversity images are used, both two are
out-of-focus with defocus $-3$ and $3$ respectively. The perturbation
$\epsilon$ can be taken as small as possible, here we take
$\epsilon=10^{-14}$.
\begin{figure}[!htbp]
  \centering
  \includegraphics[width=\textwidth,scale = .85]{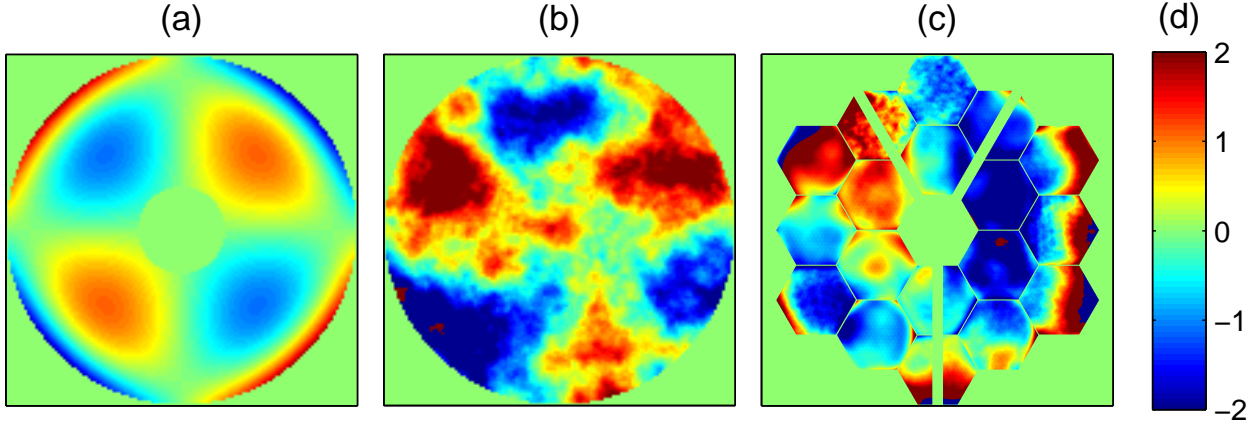}
\caption{Three test types: (a) Zernike (size $128\times 128$), (b) von
  Karman (size $128\times 128$)
   and (c) JWST (size $1024\times 1024$), (d) is the colorbar used through this paper,
if not specified. The pointwise angle (defined in interval
$(-\pi,\pi]$) of the complex wavefront is plotted.}
\label{fig:1}
\end{figure}

We denote the solution to the wavefront phase retrieval problem as
$\bm{u}$, and $\hat{\bm{u}}$ is the returned solution by the line
search method, the relative root mean squared error (RMS) is defined
as 
\begin{equation}
  \label{eq:50}
  RMS = \min_{\lvert c\rvert=1}\frac{\norm{c\bm{u}-\hat{\bm{u}}}_2}{\norm{\bm{u}}_2},
\end{equation}
where $c$ is to get rid of the effect of the constant phase shift of
phase problem. From 
\begin{equation*}
\langle c\bm{u}-\hat{\bm{u}},c\bm{u}-\hat{\bm{u}}\rangle=\lvert
  c\rvert^2\norm{\bm{u}}_2^2-2\re \langle
  c\bm{u},\hat{\bm{u}}\rangle+\norm{\hat{\bm{u}}}_2^2, 
\end{equation*}
the constant $c$ is given by
\begin{equation*}
  c=\frac{\langle \bm{u},\hat{\bm{u}}\rangle}{\lvert \langle \bm{u},\hat{\bm{u}}\rangle\rvert }.
\end{equation*}

In line search methods, for stop criteria, we take the three options:
the number of iterations is $150$, the tolerances for relative change
of objective function $TolFun$ and relative change of variable $TolX$
during one iteration both are $10^{-12}$. The number of stored vector
pairs for LBFGS method $m=2$. Owing to the nonconvexity of the
objectives, the behavior of the algorithms varies considerably
depending on the initialization, hence we take 10 runs for every
algorithm from random initial guesses. Those guesses all have unitary
amplitude in the pupil with random phase uniformly distributed on
$(-\pi,\pi]$.

\subsection{Comparison of Line Search Methods}
\label{sec:comp-line-search}

We first demonstrate our results with the small size problems, i.e.,
Zernike and von Karman types, for noiseless data as a
proof-of-concept. The performance of the four line search methods (SD,
NCG, TN and LBFGS) applied to Zernike and von Karman types based on
the LS model is shown in Figure~\ref{fig:2} for an exemplary run. RMSs
of all algorithms are below $10^{-5}$. The steepest descent method is
comparatively slow in all algorithms, while truncated Newton method have 
the fastest convergence rate near the solution. Note that the number of iterations
of TN method is greater than NCG and LBFGS methods for von Karman
type, the behavior is due to the indefiniteness of the Hessian
matrix. At those points we should adopt the negative curvature
direction, the frequency approximates by $87\%$ in an exemplary run.

In terms of the iterations, NCG method takes fewer iterations in four
algorithms. It is worth noting that the limiting calculation for the
wavefront phase retrieval problem is the Fourier transform, which is
accomplished with the fast Fourier transform (FFT) algorithm. The
average number of FFT calls for the four methods in 10 dependent runs
is shown in Table~\ref{tab:1}. Though NCG method takes the advantage of
fewer iterations, the number of FFT calls is greater than LBFGS
method.\footnote{Certainly, we can reduce the number of iterations by
Preconditioned nonlinear conjugate gradient. But the improvement is
limited.} The large number of FFT calls of TN method results from solving
the Newton direction by CG method, which is the most exhausted
computation.  We also test the easy-implementation Misell
method~\cite{Misell1973} to solve the wavefront phase retrieval
problem, it stagnates earlier and the relative RMS is still above
$0.99$ after $500$ iterations.
\begin{figure}[!htbp]
  \centering
  \includegraphics[width=\textwidth,scale = .85]{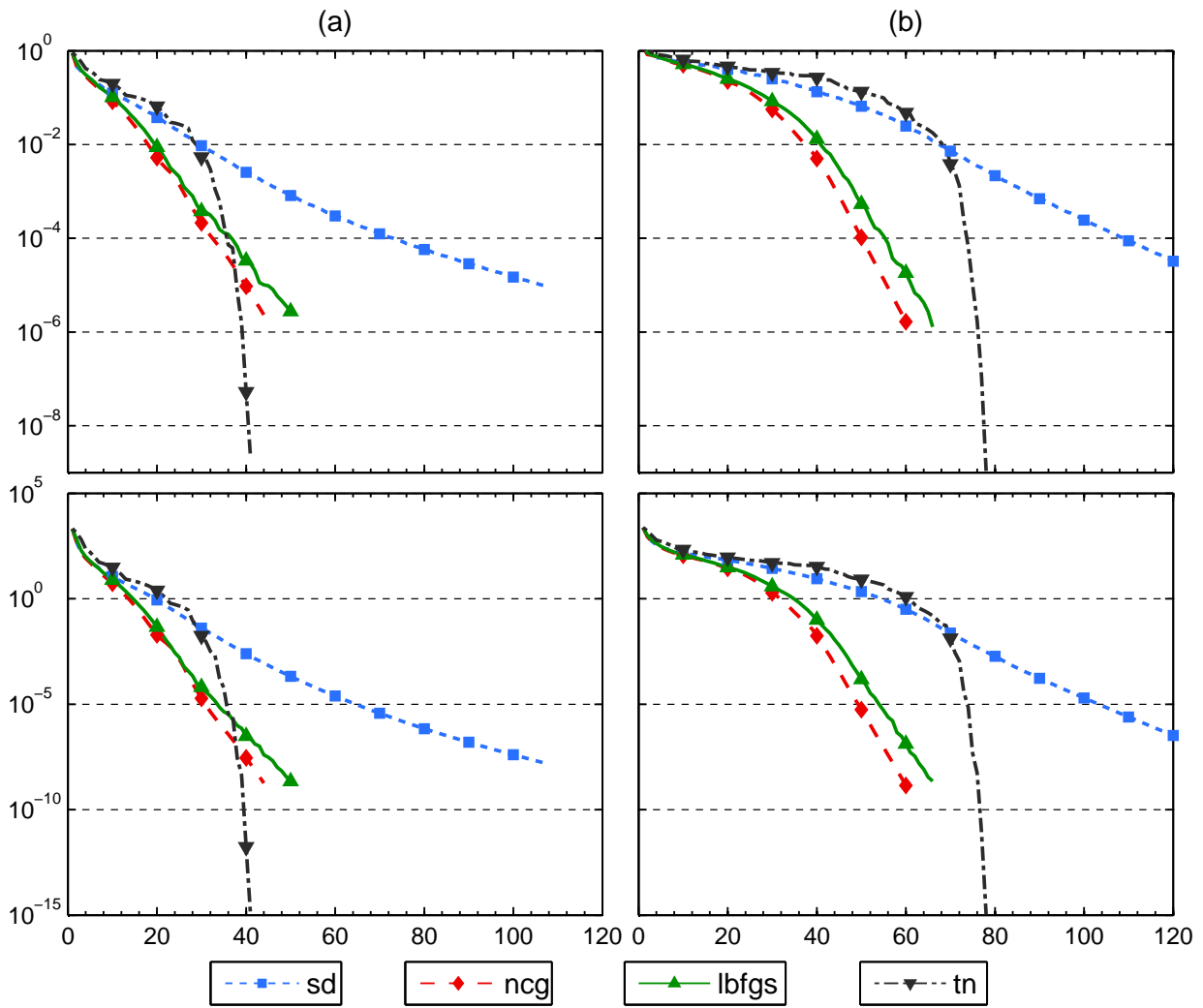}
  \caption{Comparison of algorithms applied to (a) Zernike and (b)
    von Karman data, an exemplary run is shown. Top row plots RMS versus
    iterations, bottom row shows the error misfit of LS model versus
  iterations.}
  \label{fig:2}
\end{figure}

\begin{table}[H]
\caption{Total average number of FFT calls for different methods in 10 dependent runs}
\begin{center}
\begin{tabular}{ccccc}
\toprule
       Case  &      SD  &     NCG  &   LBFGS  &  TN  \\
\midrule
    Zernike  &    1309  &     545  &     299  &    1559  \\
 von Karman  &    1868  &     713  &     418  &    2767  \\
\bottomrule
\end{tabular}
\label{tab:1}
\end{center}
\end{table}

\subsection{Choice of Misfit Functionals}
\label{sec:choice-misf-funct}

How to choose the objective to minimize? It is believed that choosing
appropriate misfit functionals in designing efficient numerical
algorithms for inverse problem. In this subsection, we give the 
performance comparison of the two models (MLP and LS) and the common
used LSI model. The data misfit functional is
just the least squares of the intensities which are directly
recorded. So its objective $\mathcal{E}_m(\bm{u})$ (see equation~\eqref{eq:11}) of LSI model is 
\begin{equation}
    \label{eq:20}
    \widehat{\mathcal{J}}_m(\bm{u})=
   \frac{1}{2}\norm{K_m(\bm{u})-\bm{I}_m}_2^2.
  \end{equation}
Its gradient and Hessian operator are given in the following:
\begin{subequations}
\begin{align}
  \nabla \widehat{\mathcal{J}}_m(\bm{u})&=F_m^{*}\bigl(F_m(\bm{u})\circ\left(\lvert F_m(\bm{u}) \rvert^2-\bm{I}_m\right)\bigr), \label{eq:34}\\
\mathcal{H}_{\widehat{\mathcal{J}}_m}[\bm{u}](\bm{h})&= F_m^{*}\bigl((2\lvert
F_m(\bm{u})\rvert^2-\bm{I}_m)\circ F_m(\bm{h})\bigr)+F_m^{*}\bigl(
F_m(\bm{u})^2\circ\overline{F_m(\bm{h})}\bigr).\label{eq:35}
\end{align}
\end{subequations}.

MLP, LS and LSI models are applied to Zernike and von Karman types is
shown in Figure~\ref{fig:3}. The decrease of LS model is the
fastest and followed by MLP model, LSI model is the slowest. The
theory in Section~\ref{sec:least-squares-model} is
validated by the numerical tests. LSI model is not appropriate for data errors with
a non-Gaussian distribution from the view of statistics. A squared norm data misfit
functional can be interpreted by maximum likelihood estimation for
data with Gaussian noise, see~\cite{Pascarella2011}. MLP model is more
appropriate for Poisson noise. Note that the LSI model is used to
solve the ptychographic phase retrieval problem~\cite{Qian2014}. It
performs well in the thousands of multiple measurements. If there are
not many measurements, the LSI model can not use to solve the phase
retrieval problem.
\begin{figure}[!htbp]
  \centering
  \includegraphics[width=\textwidth,scale = .85]{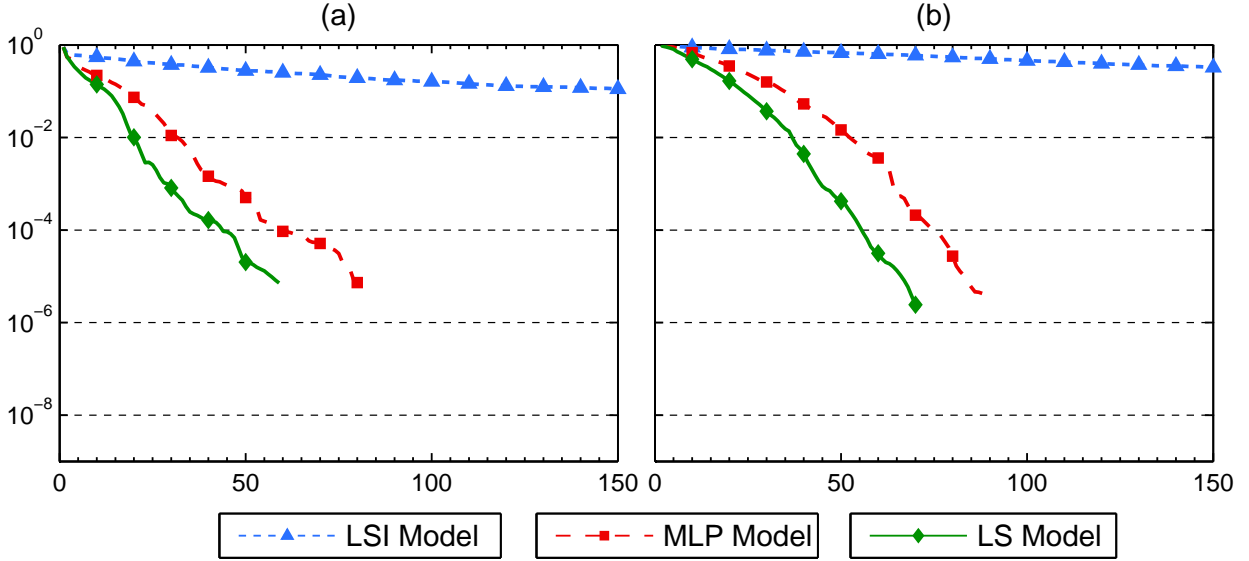}
  \caption{Comparison of different objectives for two simulation data:
    (a) Zernike, (b) von Karman. RMSs of the solution versus iterations
    are plotted.}
  \label{fig:3}
\end{figure}

\subsection{Noisy Measurements}
\label{sec:noisy-measurements}

For noisy case, we use Poisson process to simulate this noise
model. The instability of the wavefront phase retrieval problem with
different defocus is shown in Figure~\ref{fig:4}. When given two
diversity images and the signal-to-noise (SNR) is $10$, the behavior
of instability of inverse problem can be observed. While the error
misfit function of LS model decreases versus iterations. RMS decreases
in first tens of iterations, after arriving a minimum, RMS then
increases in the sequential iterations. So we follow the Mozorov's
discrepancy principles to terminate the iteration, i.e., we should
stop the iteration when the Morozov's postpripori discrepancy
principle has satisfied, like the Landweber iteration for linear
inverse problem. The minima of RMS depend on defocus setting. The
dependent is not sensitive.
\begin{figure}[!htbp]
  \centering
  \includegraphics[width=\textwidth,scale = .85]{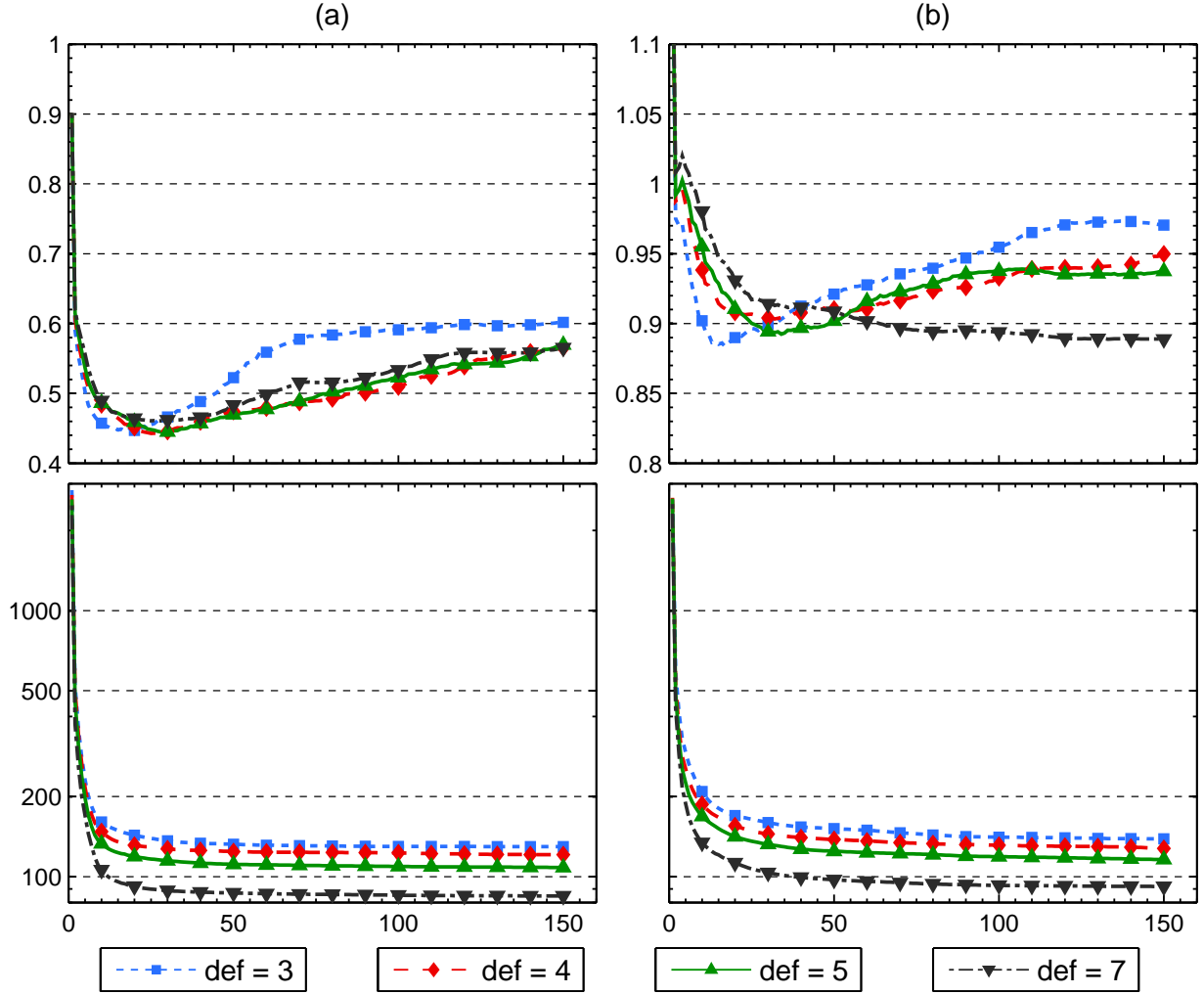}
  \caption{Instability of wavefront phase retrieval problem given two
    diversity images: (a) Zernike, (b) von Karman. RMSs versus
    iterations are
    plotted in the top row, error misfit of LS model versus iteration is
    plotted in the bottom row.}
\label{fig:4}
\end{figure}

With the facts that the best choice for error misfit functional is LS
model and the best algorithm for minimizing LS model is LBFGS
algorithm.  We explore the performance of the LBFGS algorithm applied
to three types wavefront phase retrieval problems for noisy data. We
repeat the LBFGS ten times with different random initials. The two
diversity image data is added random Poisson noise for three different
SNR levels, i.e., $30$, $20$ and $10$. Since the data is noisy, we
terminate the iteration by Morozov's principle. The recovery images
for three SNR levels case and noiseless case are illustrated in
Figure~\ref{fig:6}. To be clear, the wavefront errors for SNR levels
($20$ and $10$) are also illustrated in Figure~\ref{fig:7}. The
minimal values of RMS for the solution are approximately $0.22797$,
$0.10515$ and $0.046935$ for noise levels $SNR=10$, $20$ and $30$,
respectively.  From those RMSs, we can roughly compute the order of
convergence for the wavefront phase retrieval problem, which is used to
describe the rate of the solution in noise case that approximates the
true solution in noiseless case when noise level $SNR$ decays to zero. The
numerical rate of convergence is $0.34$, $0.33$ and $0.20$ for Zernike
type, von Karman type and JWST type, respectively.

\begin{figure}[!htbp]
  \centering
  \includegraphics[width=.85\textwidth,scale = .5]{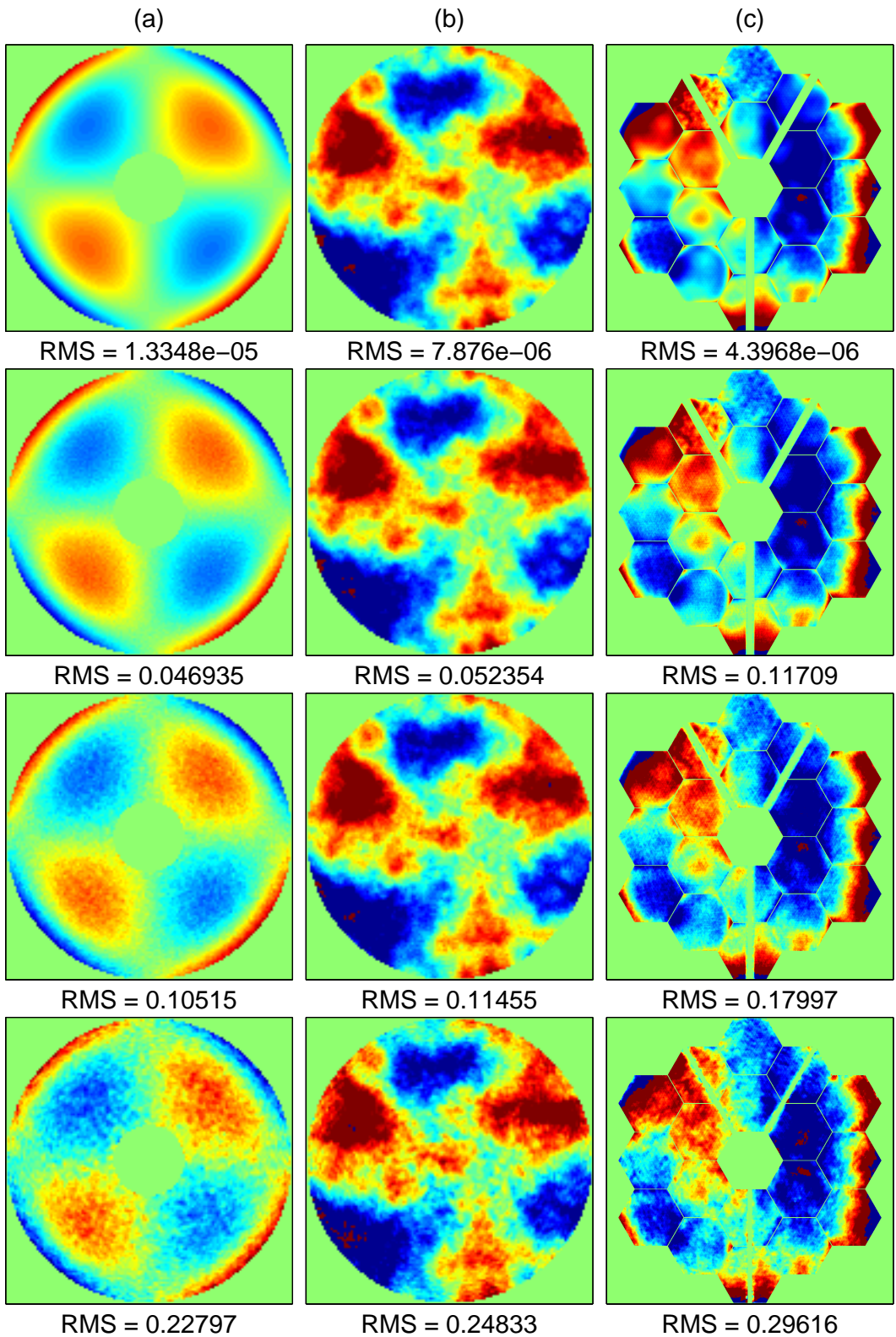}
  \caption{Recovery wavefront for three data: (a) Zernike, (b) von
    Karman, (c) JWST in different SNR level. Top row is without noise,
    then the SNR level decreasing from 30 to 10. RMSs are described 
    below for every figure.}
\label{fig:6}
\end{figure}
\begin{figure}[!htbp]
  \centering
  \includegraphics[width=\textwidth,scale = .85]{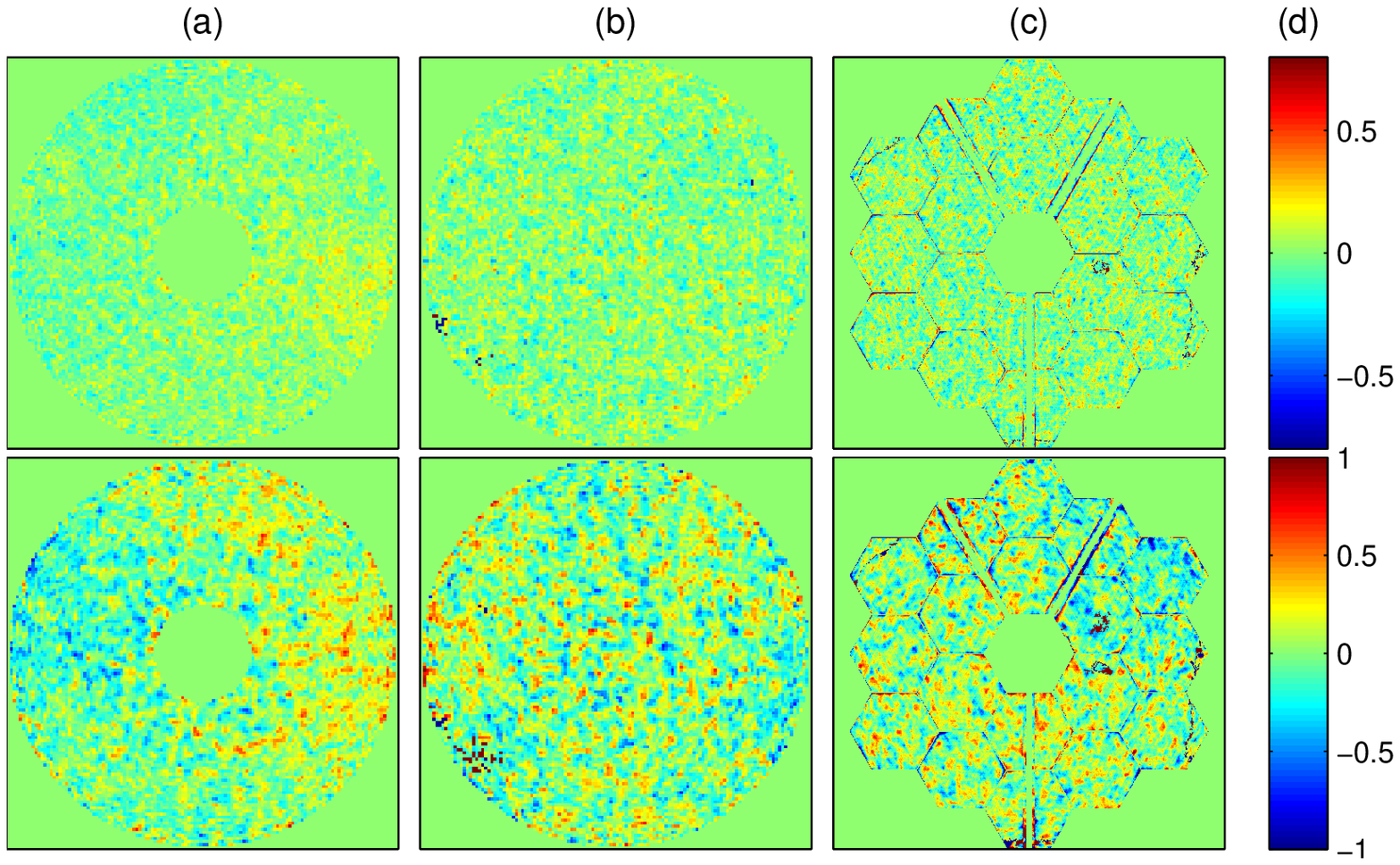}
  \caption{Difference between recovery and original wavefront: (a) Zernike, (b) von
    Karman, (c) JWST for SNR levels 20 (top row), 10 (bottom row), respectively. (d) is the corresponding colorbar.}
\label{fig:7}
\end{figure}

We also explore the numerical performance of LBFGS algorithm
applied to three wavefront phase retrieval problems, when given 
only one diversity image instead. The diversity image data is measured at
defocus $3$. This physical setting has been used by Fienup to
characterize the spherical aberration of Hubble Space Telescope~\cite{Fienup1993a}, in
which Zernike parametrization of the wavefront is used instead of pixel
parametrization.  The solution can also be obtained with more
iterations than in two diversity images case. The algorithm may fail
with some kinds of initials. Numerical solution to wavefront
phase retrieval problem is more easily obtained in the defocus
setting. The uniqueness of the solution
of the problem in defocus setting accounts for the commonly known behavior, see the recent
paper~\cite{Maretzke2015}.
\section{Conclusion}
\label{sec:conclusion}
The wavefront phase retrieval problem given diversity images, a special
example of phase retrieval, is considered in this paper. We formulated the problem as a nonlinear
nonconvex optimization problem with appropriate data misfit
functional.

We proposed three data misfit functionals (MLP and LS models),
that metric the error between noisy data and predicted data from the
formulation model to minimize. MLP model is based on maximum
likelihood estimation on account of the Poisson statistics of data and
LS model is then derived by Taylor expansion. According to Frech\'{e}t derivative,
we deduced the gradient and Hessian operator for two objective models
respectively. These analytic expressions can be incorporated
in optimization algorithm.

We discussed how standard iterative line search methods can be applied
to solve the optimization problem. Since the variables of functional
are complex, it is more convenient to optimize with the complex
gradient than with respect to real and imaginary parts, which
is used in the review paper~\cite{Luke2002}.

After introducing the complex gradient, we derived several line search
methods: the steepest descent and nonlinear conjugate gradient and
truncated Newton and LBFGS algorithm, which were applied to the
optimization problem. We analyzed the eigenvalues of complex Hessian
of three objective models and inferred that the LS model is the best in
all three models in view of its smallest condition number. We
validated our results by numerical simulations and suggested that
LBFGS algorithm give the best performance on a size of $1024\times
1024$ JWST type problem. We found that the LSI model can not be used to
solve the wavefront phase
retrieval problem, while it efficiently solved the ptychographic phase retrieval problem~\cite{Qian2014}. 

The method can be easily extended to the problem that simultaneously
recover the amplitude and phase in phase diversity
setting~\cite{Paxman1992,Brady2006}. As demonstrated in the literature~\cite{Li2013}, PhaseLift often works
much better if a priori knowledge of sparsity of the signals is
incorporated. Despite the already very good performance of the LBFGS
methods for our test problems, it is worth investigating whether
their performance can be improved by exploring the sparsity
structure. The success of the optimization method depends on the exact
known defocus. We found that if the defocus is inaccuracy with the
error above $15\%$, LBFGS algorithm failed. Other approaches for phase
retrieval with multiple measurements, such as introducing random
masks~\cite{Fannjiang2012a,Fannjiang2012}, take the same
disadvantage. So adjoint optimization with respect the wavefront and
defocus is worth investigating in the future.

The LBFGS method can be directly applied to the generalized phase
retrieval problem. It should show better performance than Wirtinger
Flow~\cite{Candes2015}, in which the line search strategy is not
applied. We are carrying on the work, which will be published
elsewhere.
\section*{Acknowledgments}
The first author thanks Heng Mao for introducing him the phase
retrieval problem and Chao Wang for helpful comments and suggestions
on this manuscript draft. The authors wish to thank Zaiwen Wen for
beneficial discussion. This work was partially supported by the
grant.

 \appendix
 \section*{Appendix}
 \label{sec:appendix}
 \setcounter{section}{1}

\subsection{Taylor Expansion for Functional}
\label{sec:tayl-expans-funct}
According to Fr\'{e}chet derivative, the Taylor expansion for functional $\mathcal{J}$ is
\begin{align}
\mathcal{J}(\bm{u}+\bm{h})&=\mathcal{J}(\bm{u})+D\mathcal{J}[\bm{u}](\bm{h})+\frac{1}{2}D^2\mathcal{J}[\bm{u}](\bm{h},\bm{h})+\text{h.o.t},\notag\\
& = \mathcal{J}(\bm{u}) + 2\re\left\langle \bm{h},\nabla \mathcal{J}(\bm{u})\right\rangle + \re\left\langle \bm{h}, \mathcal{H}_{\mathcal{J}}[\bm{u}](\bm{h})\right\rangle
+ \text{h.o.t},\label{eq:54}
\end{align}
where $\nabla \mathcal{J}$ is the gradient operator and
$\mathcal{H}_{\mathcal{J}}[u]$ is the Hessian operator of functional
$\mathcal{J}$ by the Riesz representation theorem.

 \subsection{Gradient and Hessian Operator for MLP Model}
 \label{sec:grad-hess-oper}
We just need to consider the every diversity image individually, so
hereafter we omit the subscript $m$. For MLP model,
 data misfit functional for single image is
\begin{equation}
  \label{eq:55}
  \mathcal{J}(\bm{u}) = \sum_{i=1}^N\left( K(\bm{u})_i-I_i\log(K(\bm{u})_i+\epsilon^2)\right).
\end{equation}
Then introduce $\bm{1}=(1,1,\ldots,1)\in \mathbb{C}^N$, and inner
product $\langle \bm{a},\bm{b}\rangle = \bm{a}^*\bm{b}$, equation~\eqref{eq:55} can be written as
\begin{equation*}
  \mathcal{J}(\bm{u}) = \left\langle \bm{1},K(\bm{u})-\bm{I}\circ\log(K(\bm{u})+\epsilon^2)\right\rangle,
\end{equation*}
where $\log(\cdot)$ is a pointwise function.

According to Fr\'{e}chet derivative and chain rule, we have
\begin{equation}
  \label{eq:56}
  D\mathcal{J}[\bm{u}](\bm{h}) = \left\langle \bm{1},
    DK[\bm{u}](\bm{h})-\frac{\bm{I}}{K(\bm{u})+\epsilon^2}\circ DK[\bm{u}](\bm{h})\right\rangle.
\end{equation}

We substitute
\begin{equation*}
  DK[\bm{u}](\bm{h}) = 2\re\left(F(\bm{u})\circ\overline{F(\bm{h})}\right)
\end{equation*}
into~\eqref{eq:56} resulting
\begin{align*}
  D\mathcal{J}[\bm{u}](\bm{h}) &= \left\langle \bm{1},
    2\re\left(F(\bm{u})\circ\overline{F(\bm{h})}\right)-\frac{\bm{I}}{K(\bm{u})+\epsilon^2}\circ
    2\re\left(F(\bm{u})\circ\overline{F(\bm{h})}\right)\right\rangle\\
&=2\re\left\langle\overline{F(\bm{u})}\circ F(\bm{h}),\bm{1}-\frac{\bm{I}}{K(\bm{u})+\epsilon^2}\right\rangle\\
&=2\re\left\langle \bm{h},F^*\left(F(\bm{u})\circ \left(\bm{1}-\frac{\bm{I}}{K(\bm{u})+\epsilon^2}\right)\right)\right\rangle.
\end{align*}

Hessian operator can be found in a similar way,
\begin{align*}
  D^2\mathcal{J}[\bm{u}](\bm{p},\bm{q}) &= 2\re\left\langle \overline{F(\bm{q})}\circ
  F(\bm{p}),\bm{1}-\frac{\bm{I}}{K(\bm{u})+\epsilon^2}\right\rangle\nonumber
  \\
&\phantom{=}\phantom{2\re}+2\re\left\langle \overline{F(\bm{u})}\circ
  F(\bm{p}),\frac{\bm{I}}{\left(K(\bm{u})+\epsilon^2\right)^2}\circ 2\re\left(F(\bm{u})\circ\overline{F(\bm{q})}\right) \right\rangle\\
&=2\re\left\langle F(\bm{p}),
  \left(\bm{1}-\frac{\bm{I}}{K(\bm{u})+\epsilon^2}\right)\circ
  F(\bm{q})+\frac{\bm{I}\circ
    K(\bm{u})}{\left(K(\bm{u})+\epsilon^2\right)^2}\circ F(\bm{q})\right\rangle\\
&{}+2\re\left\langle F(\bm{p}),\frac{\bm{I}\circ
    F(\bm{u})^2}{\left(K(\bm{u})+\epsilon^2\right)^2}\circ \overline{F(\bm{q})}\right\rangle\\
&= 2\re\left\langle
\bm{p},F^*\left(\left(\bm{1}-\frac{\epsilon^2\bm{I}}{\left(K(\bm{u})+\epsilon^2\right)^2}\right)\circ
F(\bm{q})\right)\right\rangle\\
&{}+2\re\left\langle
\bm{p},F^*\left(\frac{\bm{I}\circ
    F(\bm{u})^2}{\left(K(\bm{u})+\epsilon^2\right)^2}\circ \overline{F(\bm{q})}\right)\right\rangle.
\end{align*}
It can be validated that $D^2\mathcal{J}[\bm{u}](\cdot,\cdot)$ is a bilinear functional
defined on $\mathbb{C}^N\times\mathbb{C}^N$.

From Taylor expression~\eqref{eq:54}, the gradient and Hessian
operator are
given by~\eqref{eq:30} and~\eqref{eq:31}.

\subsection{Gradient and Hessian Operator for LS Model}
\label{sec:grdi-hess-oper}
Misfit function~\eqref{eq:19} can be written as
\begin{equation}
  \label{eq:57}
  \widetilde{\mathcal{J}}(\bm{u}) = \langle F(\bm{u}),F(\bm{u})\rangle -2\langle \sqrt{K(\bm{u})+\epsilon^2},\bm{M}\rangle.
\end{equation}

So derivative~\eqref{eq:57} and follow by chain rule, it yields
\begin{align*}
  D\widetilde{\mathcal{J}}[\bm{u}](\bm{h})&=2\re\left\langle F(\bm{h}),F(\bm{u})\right\rangle -\left\langle
  \frac{2\re(\overline{F(\bm{u})}\circ F(\bm{h}))}{\sqrt{\lvert
      F(\bm{u})\rvert^2+\epsilon^2}}, \bm{M}\right\rangle\\
&=2\re\left\langle \bm{h},\bm{u}-F^*\left(\frac{\bm{M}}{\sqrt{\lvert
      F(\bm{u})\rvert^2+\epsilon^2}}\circ F(\bm{u})\right)\right\rangle.
\end{align*}
From the equation
\begin{equation*}
   D\widetilde{\mathcal{J}}[\bm{u}](\bm{h})=2\re \langle \bm{h},\bm{u}\rangle-2\re\left\langle
   \frac{F(\bm{h})}{\sqrt{\lvert
       F(\bm{u})\rvert^2+\epsilon^2}},F(\bm{u})\circ \bm{M}\right\rangle,
\end{equation*}
\begin{align*}
  D^2\widetilde{\mathcal{J}}[\bm{u}](\bm{p},\bm{q}) &=2\re\langle \bm{p},\bm{q} \rangle
  +2\re\left\langle \frac{\re\left(F(\bm{u})\circ \overline{F(\bm{q})}\right)\circ
    F(\bm{p})}{\left(\lvert
    F(\bm{u})\rvert^2+\epsilon^2\right)^{3/2}},F(\bm{u})\circ
  \bm{M}\right\rangle\\
&{}-2\re\left\langle\frac{F(\bm{p})}{\sqrt{\lvert
       F(\bm{u})\rvert^2+\epsilon^2}},F(\bm{q})\circ
   \bm{M}\right\rangle\\
&=2\re\left\langle\bm{p},\bm{q}-F^*\left(\frac{\re\left(F(\bm{u})\circ \overline{F(\bm{q})}\right)}{\left(\lvert
    F(\bm{u})\rvert^2+\epsilon^2\right)^{3/2}}\circ F(\bm{u})\circ
F(\bm{M})\right)\right\rangle\\
&{}-2\re\left\langle\bm{p},F^*\left(\frac{F(\bm{q})\circ \bm{M}}{\sqrt{\lvert
       F(\bm{u})\rvert^2+\epsilon^2}}\right) \right\rangle\\
&=2\re\left\langle \bm{p},F^*\left(\left(\bm{1}-\frac{\bm{M}}{2\sqrt{\lvert
       F(\bm{u})\rvert^2+\epsilon^2}}\circ\left(\frac{\lvert
       F(\bm{u})\rvert^2+2\epsilon^2}{\lvert
       F(\bm{u})\rvert^2+\epsilon^2}\right)\right)\circ
 F(\bm{q})\right)\right\rangle\nonumber\\
&\phantom{++++}+2\re\left\langle\bm{p},F^*\left(\frac{F(\bm{u})^2\circ \bm{M}}{2\left(\lvert
    F(\bm{u})\rvert^2+\epsilon^2\right)^{3/2}}\circ \overline{F(\bm{q})}\right)\right\rangle.
\end{align*}

So gradient and Hessian-vector multiplication of LS model are given by
\eqref{eq:32} and~\eqref{eq:33}.

\printbibliography
\end{document}
